\newtheorem{theorem}{Theorem}[section]
\newtheorem{lemma}[theorem]{Lemma}
\newtheorem{proposition}[theorem]{Proposition}
\newtheorem{corollary}[theorem]{Corollary} 
\theoremstyle{definition}  
\newtheorem{definition}[theorem]{Definition}
\newtheorem{example}[theorem]{Example}
\newtheorem{remark}[theorem]{Remark}
\newcommand{\Tr}{\text{Tr}}
\newcommand{\id}{\operatorname{Id}}
\newcommand{\End}{\operatorname{End}}
\newcommand{\Hom}{\operatorname{Hom}} 
\def\uRep{\underline{\operatorname{Re}}\!\operatorname{p}}
\newcommand{\Rep}{\operatorname{Rep}}
\newcommand{\tr}{\text{tr}\,}
\newcommand{\ev}{\operatorname{ev}}
\newcommand{\coev}{\operatorname{coev}}
\newcommand{\Lift}{\operatorname{Lift}}
\newcommand{\F}{\mathcal{F}}
\newcommand{\mN}{{\mathcal N}}
\newcommand{\Z}{\mathbb{Z}}
\newcommand{\cat}{\mathcal{C}}
\def\gruRep{\operatorname{g}\!\underline{\operatorname{rRe}}\!\operatorname{p}} 
\newcommand{\sgn}{\operatorname{sgn}}
\newcommand{\ideal}{\mathcal{I}}
\newcommand{\ob}{\operatorname{Ob}(\cat)}
\newcommand{\unit}{\ensuremath{\mathds{1}}}
\newcommand{\Id}{\operatorname{Id}}
\newcommand{\md}{\operatorname{\mathsf{d}}}
\newcommand{\mt}{\operatorname{\mathsf{t}}}
\renewcommand{\tr}{\operatorname{tr}}
\renewcommand{\Tr}{\operatorname{Tr}}
\begin{document}

\title{Modified Traces on Deligne's Category $\uRep (S_{t})$}

\author{Jonathan Comes }
\address{Department of Mathematics \\
          University of Oregon\\
          Eugene, OR 97403}
\email{jcomes@uoregon.edu}
\author{Jonathan R. Kujawa}
\address{Department of Mathematics \\
          University of Oklahoma \\
          Norman, OK 73019}
\thanks{Research of the second author was partially supported by NSF grant
DMS-0734226 and NSA grant H98230-11-1-0127.}\
\email{kujawa@math.ou.edu}
\date{\today}
\subjclass[2000]{Primary 18D10, 20C30}

\begin{abstract} Deligne has defined a category which interpolates among the representations of the various symmetric groups.  In this paper we show Deligne's category admits a unique nontrivial family of modified trace functions.  Such modified trace functions have already proven to be interesting in both low-dimensional topology and representation theory.  We also introduce a graded variant of Deligne's category, lift the modified trace functions to the graded setting, and use them to recover the well-known invariant of framed knots known as the writhe. 
\end{abstract}

\maketitle

\section{Introduction}\label{S:Intro}  

\subsection{}  Let $F$ denote a field of characteristic zero and let $t \in F$.  Recently Deligne gave a definition of a category, $\uRep (S_{t})$, which interpolates among the representations over $F$ of the various symmetric groups \cite{Del07}.  Somewhat more precisely: when $t$ is \emph{not} a nonnegative integer, the category $\uRep (S_{t})$ is semisimple and when $t$ is a nonnegative integer, then a natural quotient of $\uRep (S_{t})$ is equivalent to the category of representations over $F$ of the symmetric group on $t$ letters.

Axiomatizing Deligne's construction, Knop gave a number of additional examples of interpolating categories, including representations of finite general linear groups and of wreath products \cite{Knop1, Knop2}.  More recently Etingof defined interpolating categories in other settings which include degenerate affine Hecke algebras and rational Cherednik algebras \cite{Etingof}.  Most recently Mathew provided an algebro-geometric setup for studying these categories when the parameter is generic \cite{Mathew}.  Comes and Wilson study Deligne's analogously defined $\uRep (GL_{t})$ and use it to completely describe the indecomposable summands of tensor products of the natural module and its dual for general linear supergroups \cite{CW}.   

We will be interested in Deligne's $\uRep(S_t)$.  Besides motivating the new direction of research in representation theory discussed above, it is an object of study in its own right.  Comes and Ostrik completely describe the indecomposable objects and blocks in $\uRep (S_{t})$ in \cite{CO1}, and classify tensor ideals along the way to proving a conjecture of Deligne  in \cite{CO2}.  Recently, Del Padrone used Deligne's category to answer several questions which arose out of the work of Kahn in studying the rationality of certain zeta functions \cite{Del}.

\subsection{} In this paper we will be interested in the tensor and duality structure of Deligne's category.  It is well understood that categories with a tensor and duality structure play an important role in low-dimensional topology.  The basic idea is to start with some suitable category (called a \emph{ribbon category}) which admits a tensor product and braiding isomorphisms
\[
c_{V,W} : V \otimes W \to W \otimes V.
\]  for all V and W in the category.  One uses the category to create invariants of knots, links, $3$-manifolds, etc.\  by interpreting the relevant knot or link as a morphism in the category using the braiding to represent crossings in the knot or link diagram.  See, for example,  \cite{BK, Kas, Tu} where these constructions are made precise.  

A reoccurring difficulty in this approach are the objects with categorical dimension zero.  These objects necessarily give trivial topological invariants.  Tackling this problem Geer and Patureau-Mirand defined modified trace and dimension functions for typical representations of quantum groups associated to Lie superalgebras \cite{GP}.  With Turaev they generalized this construction to include, for example, the quantum group for $\mathfrak{sl}(2)$ at a root of unity \cite{GPT}.  Along with various coauthors, they have gone on to vastly generalize their construction and use it to obtain new topological invariants.  In particular, they have shown how to use modified traces to give generalized Kashaev and Turaev-Viro-type 3-manifold invariants, to show that these invariants coincide, and that they extend to a relative Homotopy Quantum Field Theory.  Especially intriguing, they also show how to use this theory to generalize the quantum dilogarithmic invariant of links appearing in the well-known Volume Conjecture.  See \cite{GP2, GKT} and references therein.  

On the algebra side of the picture the second author worked jointly with Geer and Patureau-Mirand to provide a ribbon categorical framework for modified trace and dimension functions and considered a number of examples coming from representation theory \cite{GKP}.   They showed that these functions generalize well known results from representation theory as well as giving entirely new insights.  For example, this point of view leads to a natural generalization of a conjecture  by Kac and Wakimoto for complex Lie superalgebras.   Recently Serganova proved the original Kac-Wakimoto conjecture for the basic classical Lie superalgebras and the generalized Kac-Wakimoto conjecture for $\mathfrak{gl}(m|n)$ \cite{Ser}.  The generalized Kac-Wakimoto conjecture is in turn used to compute the complexity of the finite dimensional simple supermodules for $\mathfrak{gl}(m|n)$ by Boe, Nakano, and the second author \cite{BKN4}.

Despite the success of this program, it remains mysterious when these modified dimension functions exist.  In \cite{GKP} the authors provide examples which show that rather elementary categories in representation theory (e.g.\ certain representations of the Lie algebra $\mathfrak{sl}_{2}(k)$ over field of characteristic $p$) can fail to have modified dimensions.   Motivated by this gap in our understanding and by the aforementioned applications within low-dimensional topology and representation theory, in this paper we investigate modified trace and dimension functions within Deligne's category $\uRep (S_{t})$.  

\subsection{} Our main result (Theorem~\ref{amb}) proves that when $t$ is a nonnegative integer the only nontrivial ideal in $\uRep(S_t)$ always admits a modified trace.  This is remarkable as all known existence proofs and all previous examples of modified traces involve an abelian category.  Deligne's category, which is only abelian when $t$ is not a nonnegative integer, provides the first example of a nonabelian ribbon category which admits modified traces.  

A second interesting outcome of our investigation is the following observation.  In \cite{GKP} if $\cat$ is a $F$-linear category and $X$ is an object with $\End_{\cat}(X)/\operatorname{Rad}\left(\End_{\cat}(X) \right) \cong F$, then $X$ is called \emph{ambidextrous} if the canonical map 
\[
\End_{\cat}(X) \to \End_{\cat}(X)/\operatorname{Rad}\left(\End_{\cat}(X) \right) \cong F
\] defines a modified trace function.  In \emph{loc.\ cit.\ }many results about modified trace and dimension functions are most naturally stated for ambidextrous objects.  One might then expect that if an object with a local endomorphism ring admits a modified trace function that it should be the canonical map and, hence, $X$ should be ambidextrous.  Remarkably, we see this is not the case (see Section~\ref{0trace}).  This illustrates the subtlety of the theory.

\subsection{}  Aside from vanishing categorical dimension, the second main obstacle to using a ribbon category to construct nontrivial topological invariants is when the category has a \emph{symmetric braiding}.  That is,  
\[
c_{W,V} \circ c_{V,W} = \Id_{V \otimes W}
\] for all objects $V$ and $W$.  Knot theoretically, this corresponds to over- and under-crossings being equal.  Such categories yield only trivial topological invariants.  Deligne's category has a symmetric braiding.  This motivates the search for categories with nonsymmetric braiding arising from $\uRep (S_{t})$.

In Section~\ref{SS:gradedvariant} we define a graded variant of Deligne's category, $\gruRep (S_{t})_{q}$, and prove that there is a ``degrading'' functor $\F: \gruRep (S_{t})_{q} \to \uRep (S_{t})$.  Using this functor we can lift the modified trace functions on $\uRep (S_{t})$ to $\gruRep (S_{t})_{q}$.  In particular, the graded category has a nonsymmetric braiding and the modified trace function defines a nontrivial knot invariant.  In this way we can use Deligne's category to recover the well-known invariant of framed knots known as the \emph{writhe}.  

\subsection{}  The results of this paper raise a number of intriguing questions.  As mentioned above, Deligne's construction naturally generalizes to a wide variety of settings within representation theory.  We expect that modified traces should exist for many of these other categories and it would be interesting to investigate this question. Deligne's category $\uRep (S_{t})$ has a relatively elementary structure (for example, it has a single nontrivial tensor ideal) and we expect that studying modified traces in these other settings will be significantly more involved.  

\subsection{Acknowledgements}  This paper began during a fortuitous meeting of the authors at a workshop on Combinatorial Representation Theory in March 2010 at the Mathematics Forschungsinstitut Oberwolfach in Germany.  We are grateful to the staff of the Institute for an excellent working environment and to the organizers for arranging a stimulating workshop.  The majority of the work on this paper was done while the first author was enjoying a position at the Technische Universit\"at M\"unchen; he would like to thank the university for providing an excellent research environment.  The second author would also like to thank Nathan Geer and Bertrand Patureau-Mirand for many stimulating conversations. 

\section{Ribbon Categories and Traces}\label{S:ribboncategories}   The authors of \cite{GKP} define modified trace functions for ideals in ribbon categories.  In this section we give a brief overview of this theory but refer the reader to the above paper for further details and proofs. 

\subsection{Ribbon Categories}\label{SS:ribboncats}  For notation and the general setup of ribbon categories our references are \cite{Tu} and \cite{Kas}.  A \emph{tensor category} $\cat$ is a category equipped with a covariant
bifunctor 
\[
\otimes :\cat \times \cat\rightarrow \cat
\]
called the tensor product, a unit object $\unit$, an associativity constraint, and left and
right unit constraints such that the Triangle and Pentagon Axioms hold (see \cite[XI.2]{Kas}).  In particular, for any $V$ in $\cat$, $\unit \otimes V$ and $V \otimes \unit$ are canonically isomorphic to $V$.

A \emph{braiding} on a tensor category $\cat$ consists of a family of isomorphisms 
\[
\{c_{V,W}: V \otimes W \rightarrow W\otimes V \},
\]
 defined for each pair of objects $V,W$ which satisfy the Hexagon Axiom \cite[XIII.1 (1.3-1.4)]{Kas} as well as the naturality condition expressed in the commutative diagram \cite[(XIII.1.2)]{Kas}.
We say a tensor category is \emph{braided} if it has a braiding.  We call the braiding \emph{symmetric} if 
\[
c_{W, V} \circ c_{V,W} = \Id_{V \otimes W}
\]
for all $V$ and $W$ in $\cat$.

A tensor category $\cat$ has \emph{duality} if for each object $V$ in $\cat$ there exits an object $V^{*}$ and coevaluation and evaluation morphisms\footnote{In \cite{GKP} these maps are denoted $b_{V}$ and $d_{V}$, respectively.} 
$$\coev_{V}: \unit \rightarrow V\otimes V^{*} \: \text{ and } \: \ev_{V}: V^{*}\otimes V \rightarrow \unit$$
satisfying relations \cite[XIV.2 (2.1)]{Kas}.

A \emph{twist} in a braided tensor category $\cat$ with duality is a family 
\[
\{ \theta_{V}:V\rightarrow V \}
\]
of natural isomorphisms defined for each object $V$ of $\cat$ satisfying relations \cite[(XIV.3.1-3.2)]{Kas}. Let us point out that the existence of twists is equivalent to having functorial isomorphisms $V \xrightarrow{\cong} V^{**}$ for all $V$ in $\cat$ (cf.\ \cite[Section 2.2]{BK}).

A \emph{ribbon category} is a braided tensor category with duality and twists.  A fundamental feature of ribbon categories is the fact that morphisms in the category can be represented diagrammatically and that isotopic diagrams correspond to equal morphisms.  For the sake of brevity, we do not give the graphical calculus here but encourage the interested reader to refer to \cite{Kas}. 

In a ribbon category it is convenient to also define the morphisms

\begin{equation*}
\coev'_{V} : \unit \to V^{*} \otimes V \text{ and } \ev'_{V}: V \otimes V^{*} \to \unit
\end{equation*}
which are given by
\begin{equation*}
\coev'_V =(\Id_{V^*}\otimes \theta_V) \circ c_{V,V^*} \circ \coev_{V}  \text{ and } \ev'_V  = \ev_{V} \circ c_{V,V^*} \circ (\theta_V\otimes \Id_{V^*}).
\end{equation*}
Finally, the \emph{ground ring} of a ribbon category $\cat$ is 
\[
K=\End_{\cat}(\unit ).
\]  We assume $K$ is a field, that the category is $K$-linear, and that the tensor product is bilinear. Later references to linearity will always be with respect to $K$.
Ultimately the ground ring will be a fixed field $F$ of characteristic zero and the categories in question will be $F$-linear.

\subsection{Ideals in $\cat$}\label{SS:ideals} There are two closely related notions of an ideal within a ribbon category.  The first we discuss is used in \cite{GKP} and defined via objects.  We discuss the second notion in Section~\ref{SS:homideals}.   Note that here and elsewhere if $f$ and $g$ are morphisms, then we write $fg$ for the composition $f \circ g$.

\begin{definition}\label{D:ideal} We say a full subcategory $\ideal$ of a ribbon category $\cat$ is an \emph{ideal} if the following two conditions are met:
\begin{enumerate}
\item  If $V$ is an object of $\ideal$ and $W$ is any object of $\cat$, then $V\otimes W$ is an object of $\ideal$.
\item  $\ideal$ is closed under retracts; that is, if $V$ is an object of $\ideal$, $W$ an object of $\cat$, and if there exists morphisms $f:W \rightarrow V$, $g:V\rightarrow W$ such that $gf=\Id_W$, then $W$ is an object of $\ideal$.
\end{enumerate}
\end{definition} Trivially, if $\ideal$ consists of just the zero object or $\ideal = \cat$, then $\ideal$ is an ideal of the category.  We say an ideal $\ideal$ is a \emph{proper} ideal if it contains a nonzero object and is not all of $\cat$.

\subsection{Traces in Ribbon Categories}\label{SS:traces} For any objects $V,W$ of $\cat$ and $f \in \End_{\cat}( V \otimes W)$, set
\begin{equation}\label{E:trL}
\tr_{L}(f)=(\ev_{V}\otimes \Id_{W})(\Id_{V^{*}}\otimes
f)(\coev'_{V}\otimes \Id_{W}) \in \End_{\cat}(W),
\end{equation} and
\begin{equation}\label{E:trR}
\tr_{R}(f)=(\Id_{V}\otimes \ev'_{W})  (f \otimes \Id_{W^{*}})
(\Id_{V}\otimes \coev _{W}) \in \End_{\cat}(V).
\end{equation}

\begin{definition}\label{D:trace}  If $\ideal$ is an ideal in $\cat$, then a \emph{trace on $\ideal$} is a family of linear functions
$$\mt = \{\mt_V:\End_\cat(V)\rightarrow K\}$$
where $V$ runs over all objects of $\ideal$ and such that following two conditions hold:
\begin{enumerate}
\item  If $U\in \ideal$ and $W\in \ob$ then for any $f\in \End_\cat(U\otimes W)$ we have
\begin{equation}\label{E:VW}
\mt_{U\otimes W}\left(f \right)=\mt_U \left( \tr_R(f)\right).
\end{equation}
\item  If $U,V\in \ideal$ then for any morphisms $f:V\rightarrow U $ and $g:U\rightarrow V$  in $\cat$ we have 
\begin{equation}\label{E:fggf}
\mt_V(g f)=\mt_U(f  g).
\end{equation} 
\end{enumerate}
\end{definition}
Using the trace on $\ideal$ introduced above, we define a modified
dimension function on objects in $\ideal$.  Namely, we define the \emph{modified dimension function} 
\[
\md_{\mt}: \operatorname{Ob}(\ideal ) \to  K
\] by the formula 
\[
\md_{\mt} (V) = \mt_{V}\left(\Id_{V} \right).
\]

\begin{example}\label{X:categoricaltrace}
If $\cat$ is a ribbon category, then $\cat$ itself is an ideal and the well known \emph{categorical trace} function
\[
\tr_{\cat}: \End_{\cat}(V) \to  K
\] given by 
\[
\tr_{\cat}(f) = \ev'_{V}  (f \otimes 1)  \coev_{V}
\] defines a trace on $\cat$.  The modified dimension function then coincides with the familiar \emph{categorical dimension} function.
\end{example}

The following theorem from \cite{GKP} gives a convenient way of creating ideals with traces.  Assume that $J$ in $\cat$ admits a linear map 
\[
\mt_{J}: \End_{\cat}(J) \to K
\] which satisfies 
\begin{equation*}
\mt_{J}\left( \tr_{L}(h) \right) = \mt_{J} \left( \tr_{R}(h)\right),
\end{equation*} for all $h \in \End_{\cat}(J\otimes J)$. 
Such a linear map is called an \emph{ambidextrous trace on $J$}. 

For an object $J$, let $\ideal_{J}$ denote the ideal whose objects are all objects which are retracts of $J \otimes X$ for some $X$ in $\cat$.
\begin{theorem}\label{T:ambitrace} If $J$ is an object of $\cat$ which admits an
  ambidextrous trace, then there is a unique trace on $\ideal_{J}$ determined
  by that ambidextrous trace.
\end{theorem}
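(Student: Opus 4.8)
The plan is to construct the trace on $\ideal_J$ explicitly and then show it is well-defined, a trace, and unique. Given any object $V$ of $\ideal_J$, by definition there is an object $X$ of $\cat$ together with morphisms $\alpha \colon V \to J \otimes X$ and $\beta \colon J \otimes X \to V$ with $\beta\alpha = \Id_V$. For $f \in \End_\cat(V)$ I would define
\[
\mt_V(f) := \mt_J\bigl(\tr_R(\alpha f \beta)\bigr),
\]
where $\tr_R$ sends $\End_\cat(J \otimes X) \to \End_\cat(J)$ as in \eqref{E:trR}. The first task is to check this does not depend on the choice of $(X,\alpha,\beta)$. Given a second presentation $(X',\alpha',\beta')$, one rewrites $\tr_R(\alpha f \beta)$ using $f = \beta'\alpha' f \beta'\alpha'$ and the partial-trace ``sliding'' identities of the graphical calculus (naturality of $\tr_R$ under morphisms in the $X$-strand), reducing the comparison to an endomorphism of $J \otimes J$; at that point the ambidexterity hypothesis $\mt_J(\tr_L(h)) = \mt_J(\tr_R(h))$ is exactly what is needed to identify the two expressions. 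This step is the main obstacle, since it is where both the graphical manipulations and the defining property of an ambidextrous trace must be combined carefully.

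Once well-definedness is established, property \eqref{E:fggf} (cyclicity) is checked as follows: for $f \colon V \to U$ and $g \colon U \to V$ with $U, V \in \ideal_J$, pick a presentation of $U$, say $(X,\alpha,\beta)$, and use it to build a presentation of $V$ via the composites $\alpha f$ and $g\beta$ (after absorbing the retraction data); then $\mt_V(gf)$ and $\mt_U(fg)$ both become $\mt_J$ applied to $\tr_R$ of conjugate endomorphisms of $J \otimes X$, and equality follows from the cyclicity of the partial trace together with \eqref{E:fggf}-type invariance already baked into the construction. For property \eqref{E:VW}, given $U \in \ideal_J$ and $W \in \ob$ and $f \in \End_\cat(U \otimes W)$, note $U \otimes W$ inherits a presentation $(X \otimes W, \alpha \otimes \Id_W, \beta \otimes \Id_W)$ from one for $U$; unwinding the definition of $\mt_{U \otimes W}(f)$ and using that $\tr_R$ over the combined strand $X \otimes W$ factors as $\tr_R$ over $X$ after $\tr_R$ over $W$ gives $\mt_{U \otimes W}(f) = \mt_U(\tr_R(f))$, as required. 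Here one uses the standard compatibility of iterated partial traces in a ribbon category.

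Finally, uniqueness: if $\mt'$ is any trace on $\ideal_J$, then for $V \in \ideal_J$ with presentation $(X,\alpha,\beta)$ and $f \in \End_\cat(V)$, property \eqref{E:fggf} applied to $\alpha f \colon V \to J \otimes X$ and $\beta \colon J \otimes X \to V$ gives $\mt'_V(f) = \mt'_V(\beta \cdot \alpha f) = \mt'_{J \otimes X}(\alpha f \beta)$, and then property \eqref{E:VW} gives $\mt'_{J \otimes X}(\alpha f \beta) = \mt'_J(\tr_R(\alpha f \beta))$. Since $\mt'_J$ on $\End_\cat(J)$ is forced to be an ambidextrous trace (restriction of a trace is ambidextrous by \eqref{E:VW} and \eqref{E:fggf}), and the theorem's hypothesis singles out the given one, $\mt'_V(f)$ is determined by the formula above — so $\mt' = \mt$. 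I expect the proof to occupy a page, with the bulk of it devoted to the well-definedness check; the remaining verifications are routine applications of the graphical calculus together with the two trace axioms.
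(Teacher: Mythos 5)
The paper itself does not prove Theorem~\ref{T:ambitrace}: it is quoted verbatim from \cite{GKP}, where it appears with a full proof. So there is no in-paper argument to compare against; what I can do is compare your sketch against the one in \cite{GKP}, which it matches in overall structure: define $\mt_V(f) := \mt_J\bigl(\tr_R(\alpha f \beta)\bigr)$ from a retraction presentation of $V$, check well-definedness and the two trace axioms, and argue uniqueness. Your uniqueness paragraph is essentially the argument given there and is correct as written.

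Where your sketch is softer than the source is in not isolating the one lemma that does all the real work. In \cite{GKP} the graphical calculus is used exactly once, to prove: for any objects $X, Y$ of $\cat$ and any $g \colon J \otimes X \to J \otimes Y$, $h \colon J \otimes Y \to J \otimes X$, one has $\mt_J\bigl(\tr_R(hg)\bigr) = \mt_J\bigl(\tr_R(gh)\bigr)$, and this reduces by a ribbon-category manipulation to the defining identity $\mt_J(\tr_L(k)) = \mt_J(\tr_R(k))$ on $\End_\cat(J \otimes J)$. Once that lemma is in place, both well-definedness and cyclicity fall out with no further diagrammatics: for two presentations $(X,\alpha,\beta)$, $(X',\alpha',\beta')$ of $V$, set $p = (\alpha' f \beta')(\alpha'\beta)$ and $q = \alpha\beta'$ and compute $qp = \alpha f \beta$, $pq = \alpha' f \beta'$; and for $f\colon V \to U$, $g\colon U\to V$, set $p = \alpha_U f \beta_V$, $q = \alpha_V g \beta_U$ and compute $qp = \alpha_V g f \beta_V$, $pq = \alpha_U f g \beta_U$. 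In your current cyclicity paragraph, the pair $(\alpha f,\ g\beta)$ is \emph{not} a presentation of $V$ — their composite is $gf$, not $\Id_V$ — so that step as written doesn't quite parse; using $V$'s own presentation and invoking the lemma as above is both shorter and correct. I'd restructure the write-up so the lemma comes first, followed by the three short deductions; otherwise your plan is the right one.
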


\subsection{Tensor Ideals in a Ribbon Category}\label{SS:homideals}
A somewhat different notion of ideal is used in \cite{CO1, CO2}.  As we need both, we define it here and discuss the relationship with the earlier definition.  To distinguish the two we call these tensor ideals.  They are defined via morphisms as follows.

\begin{definition}\label{D:Homideal}  A \emph{tensor ideal}, $J$, of $\cat$ is a family of subspaces 
\[
J(X,Y) \subseteq \Hom_{\cat}(X,Y)
\] for all pairs of objects $X,Y$ in $\cat$ subject to the following two conditions:
\begin{enumerate}
\item $ghk \in I(X,W)$ for each $k \in \Hom_{\cat}(X,Y)$, $h \in  J(Y,Z)$, and $g \in \Hom_{\cat}(Z,W)$.
\item $g\otimes \Id_{Z} \in J(X\otimes Z, Y\otimes Z)$ for every object $Z$ and every $g \in J(X,Y)$. 
\end{enumerate}
\end{definition} Trivially, for every pair of objects $X$ and $Y$ one can take $J(X,Y) =0$ and obtain a tensor ideal; similarly, for every pair of objects one can take $J(X,Y) = \Hom_{\cat}(X,Y)$.  A tensor ideal $J$ is called \emph{proper} if $J(X,Y)$ is a proper nonzero subspace of $\Hom_{\cat}(X,Y)$ for at least one pair of objects $X$ and $Y$ in $\cat$.

\subsection{}  If $\ideal$ is an ideal of $\cat$ in the sense of Definition~\ref{D:ideal}, then one can define subspaces 
\[
J(X,Y) = \left\{f \in \Hom_{\cat}(X,Y) \mid \text{there exists $Z$ in $\ideal$,  $g: X \to Z$, $h:Z \to Y$ so that $f=hg$} \right\}.
\]  Then $J$ forms a tensor ideal and we write $J(\ideal)$ for this tensor ideal.

Conversely, if $J$ is a tensor ideal, then one can define $\ideal$ to be the full subcategory consisting of 
all objects $V$ in $\cat$ such that $\Id_{V} \in J(V,V)$.  This is an ideal of $\cat$  and we write $\ideal (J)$ for this ideal.  

In the following lemma we record the basic properties relating these two notions of an ideal.  The proofs are elementary arguments using the definitions and previous parts of the lemma.

\begin{lemma}\label{L:ideals}  Let $\cat$ be a ribbon category.
\begin{enumerate}
\item If $\ideal$ is an ideal of $\cat$, then $\ideal = \ideal(J(\ideal))$.
\item If $J$ is a tensor ideal of $\cat$, then $J(\ideal(J)) \subseteq J$.  That is, 
\[
 J(\ideal(J))(X,Y) \subseteq J(X,Y) 
\]
for all pairs of objects $X,Y$.
\item The ideal $\ideal$ is the zero ideal if and only if $J(\ideal )$ is the zero tensor ideal.
\item The ideal $\ideal$ is the entire category $\cat$ if and only if 
\[
J(\ideal)(X,Y)=\Hom_{\cat}(X,Y)
\]
for all pairs of objects $X,Y$ in $\cat$. 
\item If $\cat$ has a unique proper tensor ideal, say $J$, and $\ideal$ is a proper ideal of $\cat$, then $\ideal$ is the unique proper ideal and $\ideal = \ideal (J)$.
\end{enumerate}
\end{lemma}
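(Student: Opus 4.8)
The plan is to prove the five items in order, each from the two definitions together with the items already established; the real content is concentrated in (5) and the rest is bookkeeping. Recall that $J(\ideal)$ is the tensor ideal of morphisms factoring through some object of $\ideal$, while $\ideal(J)$ is the full subcategory of those $V$ with $\Id_{V}\in J(V,V)$. For (1), the inclusion $\ideal\subseteq\ideal(J(\ideal))$ is immediate, since for $V$ in $\ideal$ the factorization $\Id_{V}=\Id_{V}\circ\Id_{V}$ puts $\Id_{V}$ in $J(\ideal)(V,V)$; conversely, if $\Id_{V}\in J(\ideal)(V,V)$ then $\Id_{V}=hg$ with $g\colon V\to Z$, $h\colon Z\to V$ and $Z$ in $\ideal$, exhibiting $V$ as a retract of $Z$, so the retract axiom (2) of Definition~\ref{D:ideal} puts $V$ in $\ideal$. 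For (2), a morphism in $J(\ideal(J))(X,Y)$ factors as $h\circ\Id_{Z}\circ g$ through an object $Z$ of $\ideal(J)$ with $\Id_{Z}\in J(Z,Z)$, so condition (1) of Definition~\ref{D:Homideal} lands the composite in $J(X,Y)$.

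Parts (3) and (4) then reduce to statements about identity morphisms, using (1) for one direction of each. If $\ideal$ is the zero ideal, every morphism factoring through a zero object vanishes, so $J(\ideal)$ is the zero tensor ideal; conversely, if $J(\ideal)$ is the zero tensor ideal then $\Id_{V}\in J(\ideal)(V,V)=0$ for every $V$ in $\ideal$, which forces $V$ to be a zero object. Likewise, if $\ideal=\cat$ then every $f\colon X\to Y$ factors through $X\in\ideal$, so $J(\ideal)(X,Y)=\Hom_{\cat}(X,Y)$; and if $J(\ideal)(V,V)=\Hom_{\cat}(V,V)$ for all $V$, then $\Id_{V}\in J(\ideal)(V,V)$, so $V\in\ideal(J(\ideal))=\ideal$ by (1), giving $\ideal=\cat$.

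For (5) I would take a proper ideal $\ideal$, fix a nonzero object $V$ in $\ideal$ and an object $W$ of $\cat$ not in $\ideal$, and argue that $J(\ideal)$ is a \emph{proper} tensor ideal; uniqueness then forces $J(\ideal)=J$, and (1) yields $\ideal=\ideal(J(\ideal))=\ideal(J)$, with the identical argument applied to any proper ideal showing $\ideal(J)$ is the only one. I expect this properness check to be the one step needing care, since Definition~\ref{D:Homideal} asks for a \emph{single} pair of objects on which $J(\ideal)$ is at once nonzero and not everything, while the natural witnesses for the two inequalities sit at different objects. I would get around this by passing to $V\oplus W$ and using the additive (Karoubian) structure of the category: the idempotent endomorphism of $V\oplus W$ cutting out the $V$-summand factors through $V$, so it is a nonzero element of $J(\ideal)(V\oplus W,V\oplus W)$, whereas $\Id_{V\oplus W}$ cannot lie there, for otherwise $V\oplus W$ — hence its retract $W$ — would belong to $\ideal(J(\ideal))=\ideal$ by (1). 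Thus $J(\ideal)$ is neither the zero nor the full tensor ideal, hence proper via that single pair, and the lemma follows; everything outside this observation is a routine unwinding of the two notions of ideal.
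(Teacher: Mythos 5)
Your proof is correct, and since the paper simply declares these proofs to be ``elementary arguments using the definitions and previous parts of the lemma'' without writing them out, your unwinding is exactly the intended argument for parts (1)--(4). The one place where you add genuine content is part (5), and your instinct is right: the paper's Definition~\ref{D:Homideal} of a \emph{proper} tensor ideal literally asks for a single pair $(X,Y)$ at which $J(X,Y)$ is \emph{simultaneously} nonzero and proper, whereas the natural witnesses obtained from $V\in\ideal$ nonzero and $W\notin\ideal$ live at different objects (indeed $J(\ideal)(V,V)=\Hom_\cat(V,V)$ is everything, and $J(\ideal)(W,W)$ is proper but could be zero). Your fix --- pass to $V\oplus W$, where the projection idempotent onto $V$ is a nonzero element of $J(\ideal)(V\oplus W,V\oplus W)$ while $\Id_{V\oplus W}$ is not, by (1) --- is clean and closes the gap. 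One small caveat worth flagging: this step uses that the category is additive, which a bare ribbon category need not be; it does hold for the paper's application since $\uRep(S_t)$ (and its graded variant) are Karoubian envelopes of additive envelopes and hence additive, and in an additive category this argument shows the literal ``single-pair'' definition of proper coincides with the looser ``neither zero nor everything'' reading one might otherwise assume the authors intended.
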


\subsection{} A fundamental example of a tensor ideal is the so-called negligible morphisms.  Namely, let $\cat$ be a ribbon category and call a morphism $g: X \to Y$ \emph{negligible} if for all $h \in \Hom_{\cat}(Y,X)$, one has 
\[
\tr_{\cat}(gh) =0,
\] where $\tr_{\cat}$ denotes the categorical trace.  Setting $\mN(X,Y)$ to be the subspace of $\Hom_{\cat}(X,Y)$ of all negligible morphisms, one can check that $\mN$ is a tensor ideal. For short we call an object \emph{negligible} if it is an object in $\ideal (\mN )$.

When $t \in \Z_{\geq 0}$, $\mN$ is a proper tensor ideal of Deligne's category $\uRep(S_{t})$.  The quotient of $\uRep (S_{t})$ by this tensor ideal is equivalent to the category of finite dimensional representations over $F$ of the symmetric group $S_{t}$ (see \cite[Theorem 3.24]{CO1}).  It is in this sense that Deligne's category interpolates among the representations of the various symmetric groups.

\section{Deligne's Category $\uRep(S_t)$ }\label{}

Fix a field $F$ of characteristic zero and fix $t\in F$.   For $n \geq 0$ we write $P_{n}$ for the set of partition diagrams with vertex set $\{1,\dotsc , n,1', \dotsc , n' \}$ and $FP_{n}=FP_{n}(t)$ for the partition algebra spanned by $P_{n}$ with parameter $t \in F$.  In particular, note that the symmetric group on $n$ letters, $S_{n}$, can canonically be identified with a subset of $P_{n}$ and, moreover, the group algebra $FS_{n}$ can be identified as a subalgebra of $FP_{n}$.  We use this identification without comment in what follows.  More generally, for $a,b \in \Z_{\geq 0}$ we write $FP_{a,b}=FP_{a,b}(t)$ for the vector space spanned by the partition diagrams with vertex set $\{1, \dotsc , a, 1', \dotsc , b' \}$.

Following the notation in \cite{CO1}, we write $\uRep(S_t; F)=\uRep(S_t)$ for the category defined by Deligne which interpolates among the representations of the symmetric groups.  This is an additive (not necessarily abelian) ribbon category with a symmetric braiding.  For a precise definition of $\uRep (S_{t})$ and its ribbon category structure, we refer the reader to \cite[Section 2.2]{CO1}.  Regardless of $t$, the isomorphism classes of indecomposable objects in $\uRep(S_t)$ are in bijective correspondence with Young diagrams of arbitrary size (see \cite[Theorem 3.7]{CO1}).  Following \emph{loc.~cit.}, we will write $L(\lambda)$ for the indecomposable object (defined up to isomorphism) in $\uRep(S_t)$ corresponding to Young diagram $\lambda$.

To avoid potential confusion, it is important to point out that morphisms in $\uRep (S_{t})$ are given by pictures which are a priori unrelated to the graphical calculus of ribbon categories.  More precisely, the morphisms in $\uRep (S_{t})$ are linear combinations of so-called partition diagrams and, as such, are usually given via pictures.  We follow this convention in what follows.  Fortunately, the pictures which represent the morphisms of a ribbon category (e.g.\ the evaluation, coevaluation, and braiding morphisms) are very similar to the pictures for these morphisms in the graphical calculus of ribbon categories. And the rules for tensor product and composition (horizontal and vertical concatenation, respectively) are the same in both settings\footnote{Note, however, that we follow \cite{CO1} and ``compose down the page'' so that the diagram for $fg$ has $g$ placed atop $f$.}. The differences between the two graphical settings are minor and, consequently, the reader should not have any difficulty using context to make clear what is meant in what follows.

\section{A trace on the ideal of negligibles in $\uRep(S_0)$}  \label{0trace}

We first work out the easiest example when the category is  $\uRep(S_0)$.  In this case everything can be computed explicitly.

\subsection{Defining the Trace Function} 

Consider the indecomposable object $L(\Box)$ in the category $\uRep(S_0)$.  We will define a trace on the ideal $\ideal_{L(\Box)}$ by verifying by explicit computation that $L(\Box)$ admits an ambidextrous trace.  

 In order to define such a trace, we study the endomorphisms of the object $L(\Box)\otimes L(\Box)$.
By \cite[Proposition 6.1]{CO1} we can identify $\End(L(\Box))$ with the partition algebra $FP_1(0)$.  Hence $\End(L(\Box)\otimes L(\Box))=FP_2(0)$.  Consider the following table:
$$\includegraphics{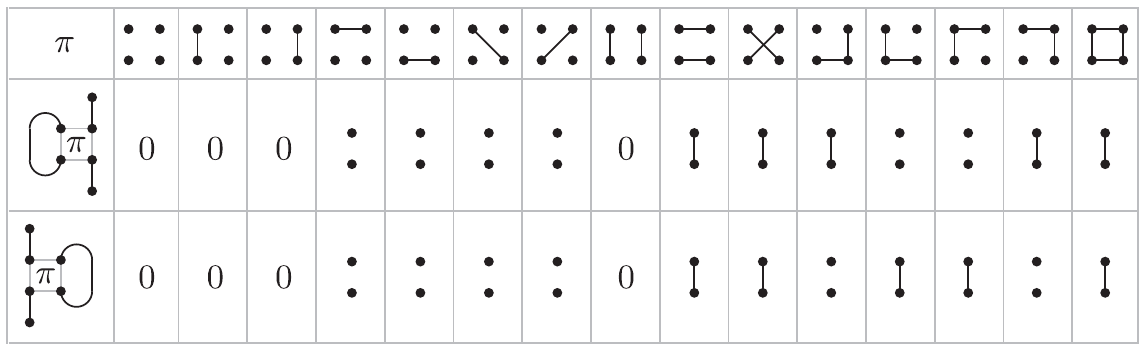}$$
From the table above we have the following:  A linear map $\mt:\End(L(\Box))\to F$ satisfies 
$$\includegraphics{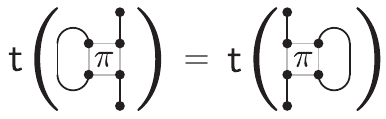}$$ for all partition diagrams $\pi\in FP_2(0)$ if and only if $\mt$ is constant on the two partition diagrams in $FP_1(0)$.  Therefore there is a unique ambidextrous trace function for $L(\Box)$ up to a constant multiple.  Hence by Theorem~\ref{T:ambitrace}  there is a unique trace on $\ideal_{L(\Box)}$ up to constant multiple.  We normalize by setting $\mt$ to be the trace function with $\mt(\id_{L(\Box)})=1$.  In summary, we have the following result.

\begin{theorem}\label{T:tzero}  There is a unique trace $\mt = \left\{\mt_{V} \right\}_{V \in \ideal_{L(\Box)}}$ on $\ideal_{L(\Box)}$ such that $\mt_{L(\Box)}(\id_{L(\Box)} )=1$.
\end{theorem}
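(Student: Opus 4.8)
The plan is to reduce the statement entirely to Theorem~\ref{T:ambitrace}, so that the only genuine work is to verify that $L(\Box)$ in $\uRep(S_0)$ admits an ambidextrous trace and to determine the dimension of the space of such traces. First I would recall from \cite[Proposition 6.1]{CO1} that $\End(L(\Box)) \cong FP_1(0)$ and hence $\End(L(\Box)\otimes L(\Box)) \cong FP_2(0)$; since $P_1$ has exactly two partition diagrams (the one joining $1$ to $1'$ and the one separating them) and $P_2$ is finite, everything can be made explicit. The key computation is to describe the two linear maps $\tr_L, \tr_R : \End(L(\Box)\otimes L(\Box)) \to \End(L(\Box))$ on each partition diagram $\pi \in P_2(0)$. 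For $\uRep(S_0)$ the evaluation, coevaluation, braiding and twist morphisms are all given by very simple partition diagrams (the twist is trivial and the braiding is the transposition), so $\tr_L(\pi)$ and $\tr_R(\pi)$ are computed by the standard partition-diagram composition rule — stacking $\pi$ on top of the cap/cup diagrams and counting the closed loops that are removed, each contributing a factor of $t = 0$. This produces the table displayed in the paper.

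Second, from that table I would read off the linear constraints that an arbitrary linear functional $\mt : \End(L(\Box)) \to F$ must satisfy in order that $\mt(\tr_L(\pi)) = \mt(\tr_R(\pi))$ for all $\pi \in P_2(0)$. The point is that $\mt$ is determined by its two values on the two basis diagrams of $FP_1(0)$, and the collection of equations coming from the $\pi$'s forces exactly one relation: $\mt$ must take the same value on both diagrams. Hence the space of ambidextrous traces on $L(\Box)$ is one-dimensional. Applying Theorem~\ref{T:ambitrace}, each ambidextrous trace extends uniquely to a trace on $\ideal_{L(\Box)}$, and since the ambidextrous traces form a one-dimensional space, so do the traces on $\ideal_{L(\Box)}$. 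The normalization $\mt_{L(\Box)}(\id_{L(\Box)}) = 1$ then pins down a unique such trace, because $\id_{L(\Box)}$ is one of the two basis diagrams of $FP_1(0)$ and the nontrivial ambidextrous trace is nonzero on it.

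The main obstacle — really the only nontrivial point — is the explicit evaluation of $\tr_L$ and $\tr_R$ on all of $P_2(0)$, i.e.\ producing the table correctly. One must be careful about two things: the convention that one ``composes down the page'' (so that in $fg$ the diagram for $g$ sits on top of $f$), which affects how the cap/cup diagrams are stacked against $\pi$; and the bookkeeping of closed loops, each of which contributes the scalar $t$, which here is $0$ and so kills many terms. Because the parameter is $0$, several $\tr_L(\pi)$ and $\tr_R(\pi)$ land on the ``non-propagating'' diagram of $FP_1(0)$ with coefficient zero or on a scalar multiple of it, and the asymmetry between $\tr_L$ and $\tr_R$ turns out to cancel precisely when $\mt$ is constant on the two diagrams. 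I would present the table, point to one or two representative entries to illustrate the loop-counting, and leave the remaining entries as a routine check, then conclude by the argument above. I expect no difficulty beyond this diagrammatic computation, since the rest is a direct invocation of Theorem~\ref{T:ambitrace}.
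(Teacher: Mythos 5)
Your proposal matches the paper's proof essentially step for step: identify $\End(L(\Box)) \cong FP_1(0)$ and $\End(L(\Box)\otimes L(\Box)) \cong FP_2(0)$, compute the $\tr_L/\tr_R$ table on the partition diagrams of $P_2(0)$ (with $t=0$ killing loop terms), deduce that the ambidextrous condition forces $\mt$ to be constant on the two basis diagrams of $FP_1(0)$, invoke Theorem~\ref{T:ambitrace}, and normalize. The one mild gloss — that uniqueness of the trace on $\ideal_{L(\Box)}$ follows from one-dimensionality of ambidextrous traces because any trace on the ideal restricts to an ambidextrous trace on $L(\Box)$ — is present in the paper's argument to exactly the same degree, so it is not a deviation.
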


We note that by the classification of tensor ideals in $\uRep(S_0)$ in \cite{CO2} there is a unique proper tensor ideal in the category and it contains all indecomposable objects except $L(\varnothing)$. This is the tensor ideal $\mN$ of negligible morphisms.  Using Lemma~\ref{L:ideals} it follows that there is a unique proper ideal.  That is, $\ideal_{L(\Box )}$ is the unique proper ideal and it equals $\ideal (\mN )$.

\subsection{Dimensions in the Non-Semisimple Block}\label{Dimsec}  By \cite[Theorem 6.4]{CO1} the category $\uRep(S_0)$ has a unique nontrivial block.  The indecomposables in this block can be described explicitly and are denoted by $L_n=L((1^n))$ for $n \in \Z_{\geq 0}$.   In this section we compute the modified dimensions $\md_{\mt}\left( L_n\right)$ for all $n>0$.  

Recall that any indecomposable object in $\uRep(S_0)$ is of the form $([n], e)$ for some primitive idempotent $e\in FP_n(0)$ (see \cite[Proposition 2.20(2)]{CO1}).  Moreover, $A:=([n], e)$ is the direct summand of $B:=([n], \id_n)=L(\Box)\otimes ([n-1], \id_{n-1})$ where the inclusion map $A\to B$ and the projection map $B\to A$ are both given by $e$.  Hence 
\begin{equation*}\label{dcalc}\includegraphics{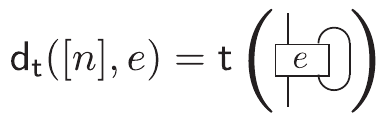}.\end{equation*}

Let $S_{n}$ be the symmetric group on $n$ letters whose elements are viewed as endomorphisms in Deligne's category \cite[Remark 2.14]{CO1}, and let $\sgn : S_{n} \to \{\pm 1 \}$ be the usual sign function.  Recall from \cite[Proposition 6.1]{CO1} that $L_n\cong([n], s_n)$ where 
\begin{equation}\label{E:sndef}
s_n=\frac{1}{n!}\sum_{\sigma\in S_n}\sgn (\sigma)\sigma.
\end{equation}
  In particular, $$\includegraphics{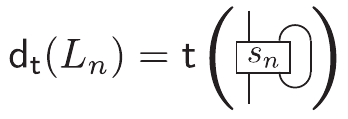}.$$  
For example, $$\includegraphics{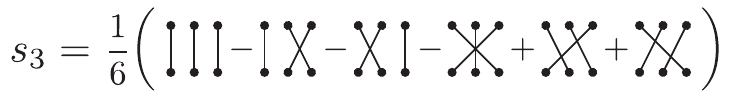}$$ so that $$\includegraphics{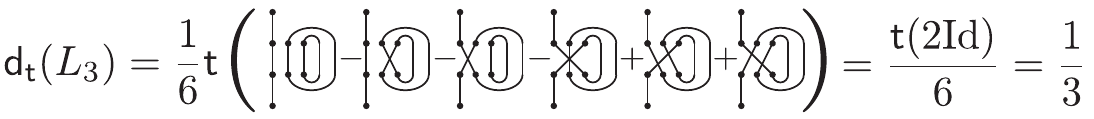}.$$
More generally, given $\sigma\in S_n$, 
$$\includegraphics{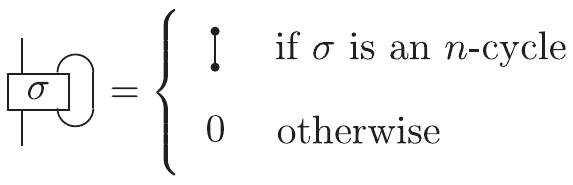}.$$ Hence $$\md_{\mt}(L_n)=(-1)^{n+1}\frac{(\text{number of $n$-cycles in }S_n)}{n!}=\frac{(-1)^{n+1}}{n}.$$

\section{A trace on the ideal of negligibles in $\uRep(S_t)$ when $t\in\Z_{\geq0}$}

We now consider the general case when $t$ is a nonnegative integer.  Let $\mN$ be the tensor ideal of negligible morphisms and let $\ideal = \ideal (\mN )$. Recall that by definition we call the objects of $\ideal$ \emph{negligible}.  In this section we show there exists a nonzero trace on $\ideal$ in $\uRep(S_t)$ when $t$ is a nonnegative integer.

\subsection{Notation}  Given $\sigma\in S_n$ and $I\subset\{1,\ldots, n\}$, let $\sigma_I$ denote the partition diagram obtained from the partition diagram for $\sigma$ by removing all edges adjacent to top vertices labelled by elements of $I$.  Also, for $i\in\{1,\ldots, n\}$ we write $\sigma_i=\sigma_{\{i\}}$.
For example, if 
$$\includegraphics{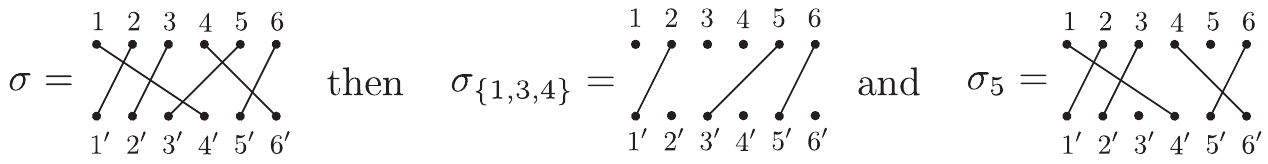}.$$
Given $n\in\mathbb{N}$, write $x_n=(1_{S_n})_n$ where $1_{S_n}$ is the identity permutation in $S_n$.
Finally, let $S_n^-:=\{\sigma_i~|~\sigma\in S_n, 1\leq i\leq n\}$.  

\subsection{The Object $M_n$}\label{SS:Mndef}   Let $M_n:=([n], s_n)\in\uRep(S_t)$ where $s_n$ is as in Equation~\ref{E:sndef}.  

\begin{proposition}\label{decM} We have the following results about $M_{n}$.
\begin{enumerate}
\item $M_n= L((1^n))$ in $\uRep(S_0)$ for all $n\geq 0$.
\item $M_n=L((1^n))\oplus L((1^{n-1}))$ in $\uRep(S_t)$ for all $n>0$ whenever $t\not=0$.
\end{enumerate}
\end{proposition}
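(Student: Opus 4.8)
The plan is to compute the decomposition of $M_n = ([n], s_n)$ into indecomposables by identifying the structure of $s_n$ inside the partition algebra $FP_n(t)$, and in particular to recognize $s_n$ as a sum of primitive idempotents. Recall that $s_n$ is the antisymmetrizing idempotent of the group algebra $FS_n \subseteq FP_n(t)$, so $([n], s_n)$ is a priori a summand of $([n], \id_n) = L(\Box)^{\otimes n}$. For part (1), when $t = 0$ this is exactly \cite[Proposition 6.1]{CO1}, which identifies $L((1^n)) \cong ([n], s_n)$; so part (1) is just a restatement of that known fact (and one should simply cite it). The content is part (2).

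For part (2), the approach I would take is to show that when $t \neq 0$ the idempotent $s_n$ is \emph{not} primitive in $FP_n(t)$ — unlike in the $t=0$ case — and that it splits as a sum of two orthogonal primitive idempotents, one of which cuts out $L((1^n))$ and the other $L((1^{n-1}))$. The natural way to see the splitting is to use the fact that in $FP_n(t)$, conjugation/multiplication by the partition diagrams involving the single ``through-strand removed'' elements $\sigma_i$ (equivalently, the elements $x_n$ and its relatives from the Notation subsection) gives nontrivial idempotent pieces when $t$ is invertible. Concretely, I expect $e := s_n \cdot \frac{1}{t} x_n' \cdot s_n$ (suitably normalized) is an idempotent with $s_n e = e s_n = e$, so that $s_n = e \oplus (s_n - e)$; then one checks $([n], e) \cong L((1^{n-1}))$ by showing $e$ factors through $([n-1], s_{n-1})$ (via the diagram that adds/removes a strand), and $([n], s_n - e) \cong L((1^n))$ because $s_n - e$ remains the ``genuinely new'' piece. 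The identification of the two summands with the claimed $L(\lambda)$'s can be pinned down using the classification of indecomposables in \cite[Theorem 3.7]{CO1} together with the recursive structure of $L(\Box) \otimes L((1^{n-1}))$: one knows $L(\Box) \otimes L((1^{n-1}))$ and matching lowest-degree partition diagrams / the block structure forces the labels.

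Alternatively — and this may be cleaner — I would argue by induction on $n$ using the recursion $M_n = ([n], s_n)$ is a summand of $L(\Box) \otimes M_{n-1}$ (since $s_n = s_n (s_{n-1} \otimes \id_1) $ after reindexing, as antisymmetrization over $S_n$ factors through antisymmetrization over $S_{n-1}$), combined with the known decomposition of $L(\Box) \otimes L(\lambda)$ in $\uRep(S_t)$ for $t \neq 0$ (which follows from semisimplicity when $t \notin \Z_{\geq 0}$, and from \cite{CO1} when $t \in \Z_{>0}$). By the base case $M_1 = L(\Box) \oplus L(\varnothing)$ (the latter because $x_1 = (1_{S_1})_1$ is, up to the factor $t$, an idempotent cutting out the trivial object when $t \neq 0$) and the inductive hypothesis $M_{n-1} = L((1^{n-1})) \oplus L((1^{n-2}))$, one expands $L(\Box) \otimes M_{n-1}$, notes that $M_n$ is the summand killed by the symmetrizer-type projection, and reads off $L((1^n)) \oplus L((1^{n-1}))$ from the list of constituents.

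The main obstacle I anticipate is precisely the bookkeeping of \emph{which} indecomposable summands of $L(\Box) \otimes L((1^{n-1}))$ actually appear in $M_n$ versus which are discarded — i.e. showing the ``boundary'' term $L((1^{n-1}))$ appears with multiplicity one and nothing else survives. This requires either an explicit idempotent computation in $FP_n(t)$ (tracking how $s_n$ interacts with diagrams that have fewer than $n$ through-strands, using $t \neq 0$ crucially to normalize) or a careful application of the tensor product rules and block combinatorics from \cite{CO1}. The case $t \in \Z_{>0}$ is where one must be most careful, since there $\uRep(S_t)$ is not semisimple; but the relevant $L((1^k))$ for small $k$ all lie in a controlled part of the category, and for generic-enough $n$ relative to $t$ the computation reduces to the semisimple pattern. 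I would isolate the idempotent lemma ($s_n = e + (s_n - e)$ with $e$ primitive, $s_n - e$ primitive) as the technical heart and prove it directly by a partition-diagram calculation.
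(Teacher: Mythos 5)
Your proposal takes a genuinely different route from the paper. The paper proves part (2) by pushing the $t=0$ answer through the $\Lift_t$ machinery of \cite{CO1}: since the idempotent $s_n$ does not depend on $t$, one has $\Lift_t(M_n) = \Lift_0(M_n)$, and then \cite[Example 5.10(1), Lemma 5.20]{CO1} identify $\Lift_t(M_n) = L((1^n)) \oplus L((1^{n-1}))$; finally \cite[Proposition 3.12(3)]{CO1} (which controls how indecomposable summands behave under $\Lift_t$) transfers this decomposition back down to $\uRep(S_t)$. This is efficient precisely because the lifting formalism handles the label-matching for all $t \neq 0$ uniformly, including the non-semisimple integer values.

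Your proposal instead computes directly in $FP_n(t)$, and that can be made to work. The key identity you need is $(s_n x_n s_n)^2 = \tfrac{t}{n}\, s_n x_n s_n$ (your normalizing constant should be $\tfrac{n}{t}$, not $\tfrac{1}{t}$): one computes $x_n \sigma x_n$ for $\sigma \in S_n$, observes that it equals $t\,\sigma_n$ when $\sigma(n)=n$ and lies outside $S_n \sqcup S_n^-$ otherwise, so Proposition~\ref{snprop}(2)--(3) kills the latter terms and collapses the former, leaving $\tfrac{t(n-1)!}{n!}\, s_n x_n s_n$. Thus for $t \neq 0$ the two-dimensional algebra $\End(M_n)$ with basis $\{s_n,\, s_n x_n s_n\}$ is split semisimple, and $e := \tfrac{n}{t}\, s_n x_n s_n$ and $s_n - e$ are orthogonal primitive idempotents. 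This is more elementary than the paper's argument and gives explicit idempotents, which is a genuine advantage.

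Where you still have work to do is exactly where you flag trouble: identifying the two summands as $L((1^n))$ and $L((1^{n-1}))$, for all $t \neq 0$ including the non-semisimple integer cases. Showing that $e$ factors through $[n-1]$ via $x_n = (\id_{n-1}\otimes u)(\id_{n-1}\otimes u^*)$ is straightforward and places $([n],e)$ as a retract of $[n-1]$, but pinning down the exact Young diagram label — rather than some other summand of $[n-1]$ — requires an argument, and ``matching lowest-degree partition diagrams / block combinatorics'' is not yet a proof. For $t \notin \Z_{\geq 0}$ you can resolve it by dimension counting in the semisimple category, but for $t \in \Z_{>0}$ you would either have to reinvent something like the lifting argument or invoke \cite[Theorem 3.7]{CO1} more carefully than you sketch. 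Your alternative inductive approach has the same issue compounded: the tensor product decomposition of $L(\Box)\otimes L((1^{n-1}))$ in the non-semisimple cases is itself nontrivial and not something you can read off the semisimple pattern. So: correct core idea, genuinely different and more explicit method, but the label identification is a real gap in the sketch as written.
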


\begin{proof} (1) This is \cite[Proposition 6.1]{CO1}.

(2)  Notice $\Lift_t(M_n)=([n], s_n)$ for all $t\in F$ (see \cite[\S3.2]{CO1}).  In particular, $\Lift_t(M_n)=\Lift_0(M_n)$ for all $t\in F$.  Hence, by part (1) along with \cite[Example 5.10(1), Lemma 5.20(2)]{CO1}, $\Lift_t(M_n)=L((1^n))\oplus L((1^{n-1}))$ for all $t\in F$.  
Notice that $P_{(1^n)}(x)=\frac{1}{n!}\prod_{k=1}^n(x-k)$ for all $n>0$ (see \cite[section 3.5]{CO1}).  Hence $L((1^n))$ is in a nontrivial block of $\uRep(S_t)$ if and only if $t$ is a nonnegative integer with $t\not\in\{1,\ldots, n\}$ (see \cite[Proposition 5.11]{CO1}).
If $t$ is an integer greater than $n$, then $L((1^n))$ is the minimal object in a nontrivial block of $\uRep(S_t)$ (see \cite[Corollary 5.9]{CO1}).  Hence, by \cite[Lemma 5.20(1)]{CO1}, $\Lift_t(L((1^n)))=L((1^n))$ for all $n>0$, $t\not=0$.  Therefore, by \cite[Proposition 3.12(3)]{CO1}, $M_n=L((1^n))\oplus L((1^{n-1}))$ in $\uRep(S_t)$ for all $n>0$, $t\not=0$.
\end{proof}

\begin{corollary}\label{Mneg} Suppose $t$ and $n$ are nonnegative integers with $t<n$.  Then $M_n$ is a negligible object in $\uRep(S_t)$.
\end{corollary}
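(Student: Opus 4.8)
We want to show that $M_n = ([n], s_n)$ is negligible in $\uRep(S_t)$ whenever $t, n$ are nonnegative integers with $t < n$. Since $\ideal(\mN)$ is closed under retracts and direct sums, and by Proposition~\ref{decM}(2) we have $M_n = L((1^n)) \oplus L((1^{n-1}))$ when $t \neq 0$, it suffices to show that each of $L((1^n))$ and $L((1^{n-1}))$ is negligible, i.e.\ has categorical dimension zero (equivalently, $\Id$ is a negligible morphism). The case $t = 0$ is covered separately by Proposition~\ref{decM}(1), where $M_n = L((1^n))$ and we already saw in Section~\ref{0trace} (the dimension computation) that these objects lie in the nontrivial block $\mN$; alternatively, for $t=0$ every $L((1^k))$ with $k>0$ is negligible since $\uRep(S_0)$ has $L(\varnothing)$ as its only non-negligible indecomposable.

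So assume $t \neq 0$. The key point is a block-theoretic one: by \cite[Proposition 5.11]{CO1} (quoted in the proof of Proposition~\ref{decM}), $L((1^k))$ lies in a \emph{nontrivial} block of $\uRep(S_t)$ precisely when $t$ is a nonnegative integer with $t \notin \{1, \dots, k\}$. For our range $t < n$: the object $L((1^n))$ has $t < n$ so certainly $t \notin \{1,\dots,n\}$ — wait, we need to be careful, $t=0$ is excluded but $t$ could equal some value in $\{1,\dots,n\}$ only if $1 \le t \le n$; since $t < n$ this forces $t \in \{1, \dots, n-1\}$ is possible. Hmm — so $L((1^n))$ need not be in a nontrivial block. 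I would instead argue directly on $M_n$: the cleanest route is to show $M_n$ is negligible by exhibiting it as a retract of a negligible object, or by computing that the categorical trace of every endomorphism vanishes.

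Here is the approach I would actually carry out. Recall the quotient functor $\uRep(S_t) \to \Rep(S_t)$ with kernel $\mN$, under which $([n], \id_n)$ maps to the $n$-fold tensor power $V^{\otimes n}$ of the $(t-1)$-dimensional "standard" representation $V$ of $S_t$ (or rather the permutation-module analogue), and $s_n$ maps to the antisymmetrizer. So the image of $M_n = ([n], s_n)$ in $\Rep(S_t)$ is $\Lambda^n V$ where $V$ is $t$-dimensional (the permutation module); but $\Lambda^n V = 0$ as soon as $n > t$, i.e.\ $t < n$. Therefore the identity endomorphism of $M_n$ maps to $0$ in the semisimple quotient $\Rep(S_t)$, which means $\Id_{M_n} \in \mN(M_n, M_n)$, i.e.\ $M_n$ is a negligible object. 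This is the argument I expect to be decisive, and it handles all $t < n$ uniformly (including $t = 0$).

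\textbf{Main obstacle.} The delicate step is identifying the image of $M_n$ under the quotient functor precisely enough to conclude $\Lambda^n = 0$ — one must check that $([n], \id_n)$ goes to the $n$-th tensor power of the $t$-dimensional permutation module $F^t$ (not the $(t-1)$-dimensional irreducible standard module), and that $s_n$ goes to the usual antisymmetrizer idempotent, so that $([n], s_n)$ goes to $\Lambda^n(F^t)$, which vanishes when $n > t$. This is essentially bookkeeping with the equivalence of \cite[Theorem 3.24]{CO1}, but it is the one place where care is needed; everything else (closure of $\mN$ under retracts, the direct-sum decomposition, the reduction to $t=0$ via Proposition~\ref{decM}) is formal.
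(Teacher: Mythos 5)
Your final argument is correct and takes a genuinely different route from the paper's. Both proofs start from the same criterion --- an object of $\uRep(S_t)$ is negligible iff its image under the quotient functor $\uRep(S_t) \to \Rep(S_t)$ is zero, \cite[Theorem 3.24]{CO1} --- but the paper then invokes the decomposition $M_n = L((1^n)) \oplus L((1^{n-1}))$ of Proposition~\ref{decM} and applies \cite[Proposition 3.25]{CO1} (the criterion for an indecomposable $L(\lambda)$ to have zero image) to each summand. You instead identify the image of $M_n$ directly: $[n]$ maps to the $n$-fold tensor power of the $t$-dimensional permutation module, $s_n$ to the antisymmetrizer, so $M_n=([n],s_n)$ maps to $\Lambda^n(F^t)$, which vanishes precisely when $n>t$. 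Your route bypasses both the decomposition and the per-indecomposable criterion, handles $t=0$ and $t\neq 0$ uniformly, and makes the sharpness of the hypothesis $t<n$ transparent (for $t=n$ the image is the nonzero sign representation $\Lambda^t(F^t)$); the only cost is the bookkeeping you flag, namely checking what the quotient functor does to $[n]$ and $s_n$, which is standard. You were also right to abandon the block-theoretic first attempt: lying in a nontrivial block is not the same as being negligible (each nontrivial block still contains a non-negligible indecomposable), so \cite[Proposition 5.11]{CO1} alone would not have closed the argument.
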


\begin{proof} $M_n$ is negligible if and only if the image of $M_n$ is zero under the functor $\uRep(S_t)\to\Rep(S_t)$ (see for instance \cite[Theorem 3.24]{CO1}).  The result now follows from Proposition \ref{decM} along with \cite[Proposition 3.25]{CO1}.
\end{proof}

In the remainder of this section we examine the endomorphisms of $M_n$.

\begin{proposition}\label{snprop} We have the following equalities.

\begin{enumerate}
\item $\sigma s_n=s_n\sigma=\sgn(\sigma)s_n$ for all $\sigma\in S_n$. 
\item  $s_n\pi s_n=0$ for all partition diagrams $\pi\not\in S_n\sqcup S_n^-$. 
\item  $s_n\sigma_i s_n=\sgn(\sigma)s_nx_n s_n$ for all $\sigma\in S_n, 1\leq i\leq n$.
\end{enumerate}
\end{proposition}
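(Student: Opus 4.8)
The plan is to work throughout inside the partition algebra $FP_n(t)$, using the combinatorics of partition diagrams together with the two basic facts that $s_n$ is (up to scalar) the antisymmetrizer and that $s_n\sigma = \sgn(\sigma)s_n = \sigma s_n$ for $\sigma\in S_n$. Part (1) I would prove first, since the other two depend on it: it is the standard computation that the antisymmetrizer $s_n=\frac{1}{n!}\sum_{\sigma}\sgn(\sigma)\sigma$ satisfies $\tau s_n=\sgn(\tau)s_n$ for $\tau\in S_n$, obtained by reindexing the sum $\sigma\mapsto\tau^{-1}\sigma$, and similarly on the right; here one only needs that $S_n\hookrightarrow P_n$ is a group homomorphism and that $FS_n\subseteq FP_n$ is a subalgebra, both recalled in the excerpt.

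For part (2), I would analyze $s_n\pi s_n$ by first observing $s_n\pi s_n = \frac{1}{(n!)^2}\sum_{\sigma,\rho}\sgn(\sigma)\sgn(\rho)\,\sigma\pi\rho$. The key step is to understand when $\sigma\pi\rho$ can equal $\sigma'\pi\rho'$ for $\sigma\neq\sigma'$ (resp.\ $\rho\neq\rho'$): composing a partition diagram $\pi$ on the left and right by permutations only permutes, respectively, the labels of the top vertices and the bottom vertices of $\pi$, without changing the underlying ``shape'' of the diagram (the non-through strands, the block structure). So if $\pi\notin S_n\sqcup S_n^-$ then $\pi$ has a block containing at least two top vertices, say $i$ and $j$, \emph{or} two bottom vertices in one block, or a top vertex not connected to any bottom vertex together with... — in any such case there is a transposition $\tau=(i\,j)\in S_n$ with $\tau\pi=\pi$ (or $\pi\tau=\pi$). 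Then $s_n\pi s_n = s_n\tau\pi s_n = \sgn(\tau)s_n\pi s_n = -s_n\pi s_n$, forcing $s_n\pi s_n=0$. The content is the combinatorial classification: a partition diagram on $\{1,\dots,n,1',\dots,n'\}$ for which \emph{no} such stabilizing transposition exists on either side is exactly one that is either a permutation ($\sigma\in S_n$) or a permutation with one top edge deleted ($\sigma_i\in S_n^-$) — in the latter case one vertex $i$ on top and the corresponding $j'$ on the bottom are singletons and cannot be exchanged by left/right multiplication. I expect this combinatorial case-check to be the main obstacle; everything else is the antisymmetrizer trick.

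For part (3), I would use (1) and (2) together. Given $\sigma\in S_n$ and $i$, write $\sigma_i$ and relate it to $\sigma$: the diagram $\sigma_i$ is obtained from $\sigma$ by deleting the single top strand at vertex $i$, so $\sigma_i = \sigma\cdot x_n$ is not literally true, but $\sigma_i$ is obtained from $x_n=(1_{S_n})_n$ by permuting bottom vertices via $\sigma$ and relabeling — concretely $\sigma_i = \sigma\,(1_{S_n})_i\,$ appropriately, or one checks directly that $\sigma_i = \tau\,x_n\,\rho$ for suitable $\tau,\rho\in S_n$ with $\sgn(\tau)\sgn(\rho)=\sgn(\sigma)$ and with $\tau$ sending $n\mapsto i$. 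Then $s_n\sigma_i s_n = s_n\tau x_n\rho s_n = \sgn(\tau)\sgn(\rho)\,s_n x_n s_n = \sgn(\sigma)\,s_n x_n s_n$, using part (1) on each side. One should also record that $x_n\in S_n^-$ (it is $(1_{S_n})_n$) so that (2) does not kill $s_nx_ns_n$ a priori; whether $s_nx_ns_n$ is actually nonzero is not needed for the statement of (3) but is where the parameter $t$ will enter later. The only care needed is to make the identification $\sigma_i = \tau x_n\rho$ precise, keeping track of signs; I would do this by noting that all diagrams in $S_n^-$ have a single ``missing'' top vertex and a single ``missing'' bottom target, and that left/right multiplication by $S_n$ acts transitively on such configurations with a controlled effect on the sign.
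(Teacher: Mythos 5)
Your proposal is correct and follows essentially the same strategy as the paper's proof: part (1) by the standard reindexing of the antisymmetrizer, part (2) by producing a transposition $\tau$ with $\tau\pi=\pi$ or $\pi\tau=\pi$ whenever $\pi\notin S_n\sqcup S_n^-$ and then invoking part (1) to force $s_n\pi s_n=-s_n\pi s_n$, and part (3) by writing $\sigma_i$ as a product of permutations sandwiching $x_n$ (the paper uses $\tau\sigma^{-1}\sigma_i\tau=x_n$ with $\tau=(i\,n)$) and tracking signs via part (1). The one place you are slightly under-specified is the case enumeration in (2) -- the paper covers four cases (two top vertices in one part, two top vertices each in singleton parts, and the two bottom analogues), of which your sketch omits the ``two singletons on the same side'' cases explicitly -- but you flag this as the step needing care and arrive at the correct classification, so this is a matter of polish rather than a gap.
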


\begin{proof} Part (1) is clear.

(2) If $\pi\not\in S_n\sqcup S_n^-$ then one of the following is true:  (i) two of the top vertices of $\pi$ are in the same part; (ii) two of the top vertices of $\pi$ are in parts of size one; (iii) two of the bottom vertices of $\pi$ are in the same part; (iv) two of the bottom vertices of $\pi$ are in parts of size one.  If (i) or (ii) (resp. (iii) or (iv)) is true, then there exists a transposition $\tau\in S_n$ with $\pi\tau=\pi$ (resp $\tau\pi=\pi$).  By part (1) $\tau s_n$ (resp. $s_n\tau$) is equal to $-s_n$, hence we have   $s_n\pi s_n=s_n\pi\tau s_n=-s_n\pi s_n$ (resp, $s_n\pi s_n=s_n\tau\pi s_n=-s_n\pi s_n$).  The result follows since $F$ is not of characteristic 2.

(3) Suppose $\sigma\in S_n$ and $i\in\{1,\ldots n\}$.  If we let $\tau\in S_n$ denote the transposition $i\leftrightarrow n$, then $\tau\sigma^{-1}\sigma_i\tau=x_n$.  Hence, by part (1),  $s_n\sigma_i s_n=\sgn(\sigma^{-1})s_n\tau\sigma^{-1}\sigma_i\tau s_n=\sgn(\sigma)s_nx_n s_n$.
\end{proof}

\begin{corollary}\label{Mbasis} The set $\{s_n, s_nx_ns_n\}$ is a basis of $\End_{\uRep(S_t)}(M_n)$ for all $t\in F$, $n>0$.
\end{corollary}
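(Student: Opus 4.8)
The plan is to show that $\{s_n, s_nx_ns_n\}$ both spans $\End_{\uRep(S_t)}(M_n)$ and is linearly independent. Recall that $M_n = ([n], s_n)$, so that $\End_{\uRep(S_t)}(M_n) = s_n \cdot FP_n(t) \cdot s_n$ as a subspace of $FP_n(t) = \End_{\uRep(S_t)}([n],\id_n)$. Since $FP_n(t)$ has the partition diagrams as a basis, it suffices to understand $s_n \pi s_n$ as $\pi$ ranges over all partition diagrams.

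First I would establish spanning. For any partition diagram $\pi$, Proposition~\ref{snprop}(2) gives $s_n\pi s_n = 0$ unless $\pi \in S_n \sqcup S_n^-$. If $\pi = \sigma \in S_n$, then Proposition~\ref{snprop}(1) gives $s_n\sigma s_n = \sgn(\sigma) s_n$, a scalar multiple of $s_n$. If $\pi = \sigma_i \in S_n^-$, then Proposition~\ref{snprop}(3) gives $s_n\sigma_i s_n = \sgn(\sigma) s_n x_n s_n$, a scalar multiple of $s_n x_n s_n$. Thus every $s_n\pi s_n$ lies in $\span\{s_n, s_nx_ns_n\}$, and since these elements span $\End_{\uRep(S_t)}(M_n)$, the set $\{s_n, s_nx_ns_n\}$ spans.

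The remaining step, and the one I expect to be the genuine content, is linear independence: I must check that $s_n$ and $s_nx_ns_n$ are not scalar multiples of one another (and that neither is zero) in $FP_n(t)$ for every $t \in F$ and every $n > 0$. Note $s_n \neq 0$ since it is a nonzero idempotent (indeed $M_n$ is a nonzero object). For the independence, one can expand $s_nx_ns_n = \frac{1}{(n!)^2}\sum_{\sigma,\tau}\sgn(\sigma)\sgn(\tau)\,\sigma x_n \tau$ and observe that the diagram $\sigma x_n \tau$ has its top vertex labelled $\sigma(n)$ (the one disconnected from the bottom in $x_n$) sitting in a singleton part, whereas every diagram appearing in $s_n = \frac{1}{n!}\sum_\sigma \sgn(\sigma)\sigma$ is a permutation with no singleton parts among the top vertices. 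So when one writes both elements in the partition-diagram basis of $FP_n(t)$, $s_n x_n s_n$ is supported on $S_n^-$ while $s_n$ is supported on $S_n$, and $S_n \cap S_n^- = \varnothing$; hence they are linearly independent provided $s_nx_ns_n \neq 0$. To see $s_nx_ns_n \neq 0$, it is enough to exhibit a single diagram of $S_n^-$-type occurring with nonzero coefficient: since distinct pairs $(\sigma,\tau)$ with $\sigma x_n\tau$ equal correspond to the freedom in $\sigma,\tau$ that leaves the diagram fixed, and these come in classes whose signs do not all cancel (the stabilizer contributions behave like the computation in Proposition~\ref{snprop}(2)–(3), which already showed $s_n\sigma_i s_n$ is a fixed nonzero multiple of $s_nx_ns_n$), one gets a nonzero coefficient; alternatively, apply the degrading functor to $\uRep(S_n)$ for $n$ large, where $M_n$ and its endomorphism algebra are visibly two-dimensional. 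The main obstacle is thus the $t$-independence of $s_nx_ns_n \neq 0$: one wants an argument valid for all $t \in F$ simultaneously, which the support/basis argument in $FP_n(t)$ supplies cleanly since the partition-diagram basis and the formula for $s_nx_ns_n$ do not depend on $t$ at all.
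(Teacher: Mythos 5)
Your argument is correct and supplies exactly what the paper leaves implicit: Corollary~\ref{Mbasis} is stated without proof as an immediate consequence of Proposition~\ref{snprop}, spanning follows just as you say, and linear independence reduces via the disjoint-support observation (which is manifestly $t$-independent, since no diagram in $S_n$ can equal a diagram in $S_n^-$) to the nonvanishing of $s_n$ and $s_n x_n s_n$. The one loose spot is your sign-cancellation claim for $s_n x_n s_n \neq 0$: appealing to ``stabilizer contributions behaving like Proposition~\ref{snprop}(2)--(3)'' is circular, as those parts are consistent with $s_n x_n s_n = 0$. A tight fix is to compute the coefficient of the single diagram $x_n$ in $s_n x_n s_n = \frac{1}{(n!)^2}\sum_{\sigma,\tau}\sgn(\sigma)\sgn(\tau)\,\sigma x_n\tau$: the equation $\sigma x_n\tau = x_n$ forces $\sigma$ and $\tau$ to fix $n$ and to satisfy $\sigma\tau = \mathrm{id}$ on $\{1,\ldots,n-1\}$, so the pairs are $(\tau^{-1},\tau)$ with $\tau$ fixing $n$, each contributing $+1$, giving coefficient $(n-1)!/(n!)^2 \neq 0$ with no $t$ appearing. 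Alternatively your fallback (a dimension count at one favorable value of $t$ via Proposition~\ref{decM} plus the $t$-independence of the coefficients) works, though the phrase ``degrading functor to $\uRep(S_n)$'' should read ``the quotient functor $\uRep(S_t)\to\Rep(S_t)$ for $t>n$.''
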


\subsection{An Ambidextrous Trace on $M_n$}\label{SS:Mnambi}  Suppose $t\in F$ and $n>0$.   By Corollary~\ref{Mbasis}, to define a linear functional on $\End_{\uRep(S_t)}(M_n)$ it suffices to give the values of the linear functional on $s_n$ and $s_nx_ns_n$.  Let $\mt_n$ be the following linear map:
$$\begin{array}{rcl}
\mt_n:\End_{\uRep(S_t)}(M_n)&\to& F\\
s_n & \mapsto & 1\\
s_nx_ns_n&\mapsto&1\\
\end{array}$$ 
Notice that $\mt_1$ is the ambidextrous trace on $M_1=L(\Box)$ in $\uRep(S_0)$ studied in section \ref{0trace}.    
In this section we will show that $\mt_n$ is an ambidextrous trace on $M_n$ in $\uRep(S_t)$ for all $t\in F$, $n>0$.  To do so, we must examine the endomorphism ring $\End_{\uRep(S_t)}(M_n\otimes M_n)=(s_n\otimes s_n)FP_{2n}(t)(s_n\otimes s_n)$.  

For the remainder of this section assume $n>0$.
Given a partition diagram $\pi\in P_{2n}$, write $\pi_L$ (resp. $\pi_R$) for the partition diagram in $P_n$ obtained by restricting the partition $\pi$ to the vertices $\{1, 1',\ldots, n, n'\}$ (resp. $\{n+1, (n+1)',\ldots, 2n, (2n)'\}$).  
For example, 
$$\includegraphics{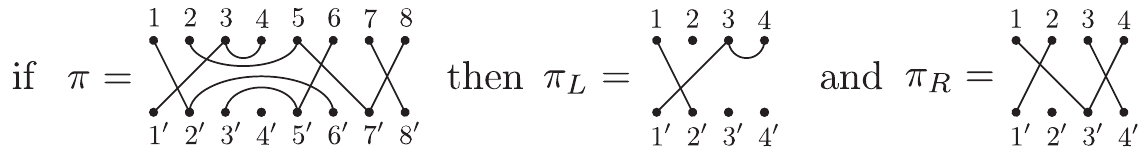}.$$  
Finally, let $\Theta_1, \Theta_2: P_{2n}\to \End_{\uRep(S_t)}(M_n)$ be the maps given by 
$$\includegraphics{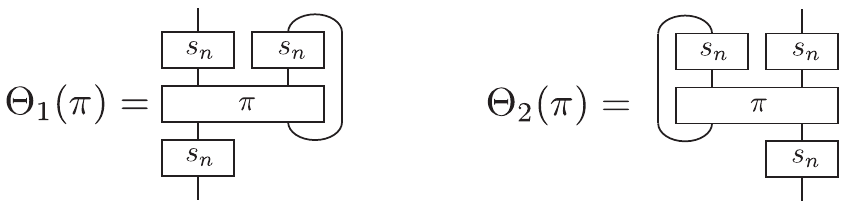}.$$ The following lemma is the first of three lemmas concerning $\Theta_1$ and $\Theta_2$ which will be used to show that $\mt_n$ is an ambidextrous trace.

\begin{lemma}\label{Thzero}  Suppose $\pi\in P_{2n}$ is a partition diagram such that $\pi_L\not\in S_n\sqcup S_n^-$ or $\pi_R\not\in S_n\sqcup S_n^-$.  Then $\Theta_1(\pi)=0=\Theta_2(\pi)$.
\end{lemma}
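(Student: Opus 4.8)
The plan is to reduce the statement to a computation about the partition algebra by unpacking the definitions of $\Theta_1$ and $\Theta_2$. Recall that $\Theta_1(\pi)$ and $\Theta_2(\pi)$ are obtained by pre- and post-composing the picture for $\pi\in P_{2n}$ with the idempotents $s_n\otimes s_n$ and then ``closing up'' half of the strands via the evaluation/coevaluation morphisms in $\uRep(S_t)$ to land in $\End_{\uRep(S_t)}(M_n)$. The key structural point is that this closing-up operation, together with the surrounding $s_n$'s, always produces an expression that contains a factor of the form $s_n \,\rho\, s_n$ for some partition diagram $\rho\in P_n$ built functorially from $\pi_L$ (respectively $\pi_R$). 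More precisely, I expect that $\Theta_i(\pi)$ can be rewritten as (a scalar power of $t$ coming from closed loops) times a product in which $s_n \pi_L s_n$ or $s_n \pi_R s_n$ appears as a subword.

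First I would carefully trace through the graphical definition of $\Theta_1$ and $\Theta_2$ given by the picture \texttt{trace10.pdf}: identify which of the $2n$ top and $2n$ bottom vertices of $\pi$ get capped off by $\ev$/$\coev$ and which remain, and verify that the ``$L$ half'' of $\pi$ (the strands among $\{1,1',\dots,n,n'\}$) gets sandwiched between the two copies of $s_n$ that survive in the target $\End(M_n)$. This identifies a factor $s_n \pi_L s_n$ (for $\Theta_1$, say) or $s_n \pi_R s_n$ (for $\Theta_2$, or vice versa depending on the picture). Then I would invoke Proposition~\ref{snprop}(2): since by hypothesis $\pi_L\notin S_n\sqcup S_n^-$ (or $\pi_R\notin S_n\sqcup S_n^-$), we get $s_n \pi_L s_n = 0$ (resp. $s_n\pi_R s_n=0$), and hence the entire expression for $\Theta_1(\pi)$ and $\Theta_2(\pi)$ vanishes. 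The hypothesis is phrased disjunctively (``$\pi_L\notin\cdots$ \emph{or} $\pi_R\notin\cdots$''), so I would need to check that \emph{each} of $\Theta_1$ and $\Theta_2$ individually picks up a vanishing factor in \emph{each} of the two cases --- i.e. that both $\pi_L$ and $\pi_R$ occur as sandwiched subwords in both $\Theta_1(\pi)$ and $\Theta_2(\pi)$, just in different positions. This should follow because $\Theta_1$ and $\Theta_2$ differ only in which pair of legs gets closed up, not in which idempotents $s_n$ are inserted.

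The main obstacle I anticipate is purely bookkeeping: correctly reading off from the diagram \texttt{trace10.pdf} exactly how the $2n$ strands are partitioned into an ``$L$'' block and an ``$R$'' block after the closure, and confirming that the closure does not mix strands from the $L$-half with strands from the $R$-half in a way that would prevent the clean factorization $s_n\pi_L s_n$ (or $s_n\pi_R s_n$) from appearing. One has to be careful that capping off strands can in principle connect an $L$-vertex to an $R$-vertex through the cap, which would spoil the factorization; I expect the definition of $\Theta_i$ is arranged precisely so that the caps only join $L$-legs to $L$-legs (and $R$-legs to $R$-legs), perhaps after using Proposition~\ref{snprop}(1) to absorb permutations. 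Once the diagrammatics are pinned down, the algebraic conclusion is immediate from Proposition~\ref{snprop}(2), so the whole proof should be short --- essentially one or two sentences of ``by definition of $\Theta_i$, the diagram contains the subexpression $s_n\pi_L s_n$ (resp. $s_n\pi_R s_n$), which is zero by Proposition~\ref{snprop}(2).''
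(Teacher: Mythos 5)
The central claim in your proposal --- that $\Theta_i(\pi)$ can be rewritten as a scalar times a product containing $s_n\pi_L s_n$ (or $s_n\pi_R s_n$) as a subword --- is false, and this is a genuine gap. When you close up the right $n$ strands, $L$-vertices that in $\pi$ are connected \emph{only through $R$-vertices} become directly connected after the closure, so the induced partition on the left half depends on the whole of $\pi$, not just on $\pi_L$; moreover the expansion of the outer $s_n$'s produces a sum over $\rho\in S_n$, each term giving a different induced diagram. This is exactly what appears in the proof of Lemma~\ref{lemmaSTP}, where $\mt_n(\Theta_1(\pi))$ is written as $\sum_{\rho\in S_n}\sgn(\rho)\,t^{\ell(\rho)}\,\mt_n(s_n\mu_\rho s_n)$ with $\rho$-dependent diagrams $\mu_\rho$ satisfying only $\mu_\rho\geq\pi_L$ in the partial order. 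You anticipate this very issue and dismiss it by expecting the definition of $\Theta_i$ to ``arrange'' that the caps never mix $L$- and $R$-legs, but that expectation is not borne out: the caps themselves stay within the $R$-block, but $\pi$ can already join $L$ and $R$ vertices, and that is enough to spoil the factorization. Trying to patch your argument by showing every $\mu_\rho\notin S_n\sqcup S_n^-$ also fails, since $\mu_\rho\geq\pi_L$ (coarser) does not preserve ``not in $S_n\sqcup S_n^-$'' --- two singleton top vertices of $\pi_L$ may get absorbed into size-two parts of $\mu_\rho$.

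The paper's proof uses a different and more robust mechanism that avoids computing $\Theta_i(\pi)$ at all. Arguing as in the proof of Proposition~\ref{snprop}(2), when $\pi_L\notin S_n\sqcup S_n^-$ one produces a transposition $\tau\in S_n$ with $\pi(\tau\otimes\id_n)=\pi$ or $(\tau\otimes\id_n)\pi=\pi$; the $\tau$ is then absorbed into the adjacent $s_n\otimes s_n$ via Proposition~\ref{snprop}(1) (namely $\tau s_n=s_n\tau=-s_n$), yielding $\Theta_i(\pi)=-\Theta_i(\pi)$ and hence $\Theta_i(\pi)=0$ since $\operatorname{char}F\neq 2$. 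Because this sign-flip happens at the level of $\pi\in P_{2n}$ \emph{before} any partial trace is taken, it requires no control over what the closure produces, which is precisely the control your proposed factorization would need but does not have.
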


\begin{proof} If $\pi_L\not\in S_n\sqcup S_n^-$ then (arguing as in the proof of Proposition~\ref{snprop}) there exists a transposition $\tau\in S_n$ with $\pi(\tau\otimes\id_n)=\pi$ or $(\tau\otimes\id_n)\pi=\pi$.  Thus $$\includegraphics{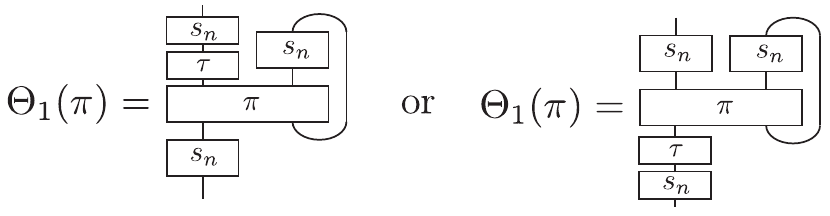}$$ In either case, by Proposition~\ref{snprop}(1), $\Theta_1(\pi)=-\Theta_1(\pi)$ and hence $\Theta_1(\pi)=0$ as the characteristic of $F$ is not 2.  Moreover, $$\includegraphics{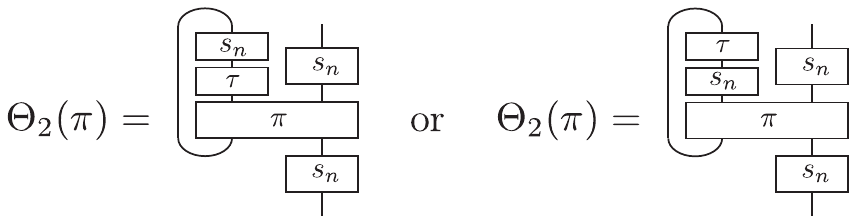}$$ Using Proposition~\ref{snprop}(1) again, we have $\Theta_2(\pi)=-\Theta_2(\pi)$ so that $\Theta_2(\pi)=0$.  The proof when $\pi_R\not\in S_n\sqcup S_n^-$ is similar.
\end{proof}

Now for the second lemma on $\Theta_1$ and $\Theta_2$.  In this lemma, the symbol $\geq$ refers to the partial order on partition diagrams found in \cite[Section 2.1]{CO1}.

\begin{lemma}\label{lemmaSTP} Suppose $\pi\in P_{2n}$ is such that $\pi_L, \pi_R\in S_n\sqcup S_n^-$.  If $\pi_L$ (resp. $\pi_R$) is in $S_n^-$ then there exists a partition diagram $\pi'\in P_{2n}$ with $\pi'_L$ (resp. $\pi'_R$) in $S_n$ such that $\pi'_R\geq\pi_R$ (resp. $\pi'_L\geq\pi_L$) and $\mt_n(\Theta_i(\pi'))=\mt_n(\Theta_i(\pi))$ for $i=1,2$.
\end{lemma}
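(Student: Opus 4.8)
The statement is about replacing a partition diagram $\pi$ whose left (or right) block pattern lies in $S_n^-$ (one of the top vertices is ``cut'', i.e.\ lies in a singleton block on top while some bottom vertex is also in a singleton block) by a diagram $\pi'$ whose left (right) pattern is a genuine permutation in $S_n$, without changing the value of $\mt_n\circ\Theta_i$. My first move is to reduce to the case where only $\pi_L\in S_n^-$ (the $\pi_R$ case being handled symmetrically by the left-right flip, and the case where both are in $S_n$ being vacuous). So assume $\pi_L=\sigma_j$ for some $\sigma\in S_n$ and $1\le j\le n$. By Proposition~\ref{snprop}(3), after sandwiching by $s_n\otimes s_n$ the precise value of $j$ and $\sigma$ is immaterial: $s_n\sigma_j s_n=\sgn(\sigma)\,s_n x_n s_n$. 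The idea is to exploit the structure of $\Theta_1,\Theta_2$, each of which is (up to the fixed $s_n$ sandwiches) built from $\pi_L$, $\pi_R$ and the cross-edges of $\pi$; I want to surgically modify $\pi$ only ``on the left half'' so that the cut on top disappears, at the cost of altering the bottom-left part of $\pi$ — this is why the conclusion only asks for $\pi'_R\ge\pi_R$ and not $\pi'_R=\pi_R$.

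\textbf{Key steps.} (1) Write $\pi_L=\sigma_j$, so in $\pi$ the top-left vertex $j$ is either isolated or connected only to bottom-left vertices (call that block $b$), while no top-left vertex is joined to anything on the right half in $\pi_L$ — but $\pi$ itself may have cross-edges from the left top/bottom vertices to the right. Split into the two cases according to whether, in $\pi$, the top-left vertex $j$ is joined to something in the right half or is genuinely isolated in all of $\pi$. (2) In either case, construct $\pi'$ from $\pi$ by \emph{reconnecting} the top-left vertex $j$ to the bottom-left vertex $j'$ (and merging/relabeling the affected blocks so that $\pi'_L$ becomes an honest permutation — concretely, $\pi'_L$ should be the permutation obtained from $\sigma_j$ by ``plugging $j\mapsto j'$'' back in, and one checks this can be arranged so $\pi'_L=\sigma$ works, possibly after a further transposition on the bottom which is absorbed by $s_n$). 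All cross-edges of $\pi$ are kept, and the right half $\pi_R$ is untouched except that the coarsening of blocks forced by adding the new edge $j\!-\!j'$ can only refine-then-merge on the left, which translates into $\pi'_R=\pi_R$ or $\pi'_R>\pi_R$ in the partial order of \cite[Section 2.1]{CO1} — giving $\pi'_R\ge\pi_R$. (3) Finally verify $\mt_n(\Theta_i(\pi'))=\mt_n(\Theta_i(\pi))$ for $i=1,2$: expand both $\Theta_i(\pi)$ and $\Theta_i(\pi')$ in the basis $\{s_n, s_n x_n s_n\}$ of $\End(M_n)$ from Corollary~\ref{Mbasis}, use Proposition~\ref{snprop}(1)--(3) to collapse the $s_n$-sandwiches, and observe that on $s_n x_n s_n$ and on $s_n$ the functional $\mt_n$ takes the same value $1$, so the only thing that matters is the total coefficient (a sum of signs $\sgn$), which is unchanged by the surgery because adding the edge $j\!-\!j'$ composed with $s_n$ on both sides multiplies by $+1$ relative to the cut (this is exactly the content of Proposition~\ref{snprop}(3), where $x_n$ is the ``all-cut-at-$n$'' diagram and the sign is trivial). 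Because $\mt_n$ agrees on the two basis vectors, the potential ambiguity between landing on $s_n$ versus $s_n x_n s_n$ after surgery is harmless — this is the crucial simplification afforded by the particular choice of $\mt_n$.

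\textbf{Main obstacle.} The delicate point is step (2): making the surgery $\pi\rightsquigarrow\pi'$ precise as an operation on set partitions of $2n+2n$ vertices in a way that (a) really does send $\pi_L\in S_n^-$ to $\pi'_L\in S_n$, (b) leaves the right-half pattern only \emph{increasing} in the partial order, and (c) interacts predictably with the cross-edges so that the $\Theta_i$-computation in step~(3) goes through identically in the ``$j$ isolated'' and ``$j$ connected to the right'' subcases. I expect the bookkeeping of blocks — particularly tracking what happens to a block that straddles the cut and the right half when we re-insert the edge $j\!-\!j'$ — to be the part that needs care; everything downstream is a mechanical application of Proposition~\ref{snprop} and Corollary~\ref{Mbasis}. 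A clean way to organize it is to argue diagrammatically: draw $\pi$ with the $s_n\otimes s_n$ sandwiches attached, use that $s_n$ kills any diagram with a repeated or doubly-singleton row of top/bottom left vertices (Proposition~\ref{snprop}(2)) to normalize $\pi_L$ to be exactly $x_n$ (the cut at position $n$) via Proposition~\ref{snprop}(3), and then the surgery is literally ``attach a through-strand at position $n$ on the left,'' whose effect on $\pi_R$ and on the cross-edges is immediate to read off.
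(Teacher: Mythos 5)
Your overall strategy (``fill in'' the cut on the left to turn $\pi_L\in S_n^-$ into a genuine permutation, and exploit that $\mt_n(s_n)=\mt_n(s_nx_ns_n)=1$ so that passing between the ``cut'' and ``uncut'' versions after the $s_n$-sandwich is invisible to $\mt_n$) is the right one, and it matches the paper's. But there are two concrete gaps.

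First, the surgery itself. You propose adding the edge $j\!-\!j'$, which does \emph{not} in general yield a permutation: writing $\pi_L=\sigma_j$, the singleton at the bottom is $\sigma(j)'$, not $j'$, so if $\sigma(j)\neq j$ then adding $j\!-\!j'$ creates a three-element block $\{j,j',\sigma^{-1}(j)\}$. The paper adds the edge $j\!-\!\sigma(j)'$, which undoes the cut exactly and gives $\pi'_L=\sigma$ immediately, with no corrective transposition needed. Your ``further transposition on the bottom absorbed by $s_n$'' can be made to produce a permutation, but that transposition also changes the \emph{sign} of $\Theta_i$ (since $s_n\tau=-s_n$), and you never track how that sign is cancelled; as written the fix introduces a sign discrepancy rather than removing it.

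Second, and more seriously, step (3) of your plan is not correct as stated. You claim the coefficient in the basis $\{s_n,s_nx_ns_n\}$ is ``a sum of signs'' and that the verification is ``a mechanical application'' of Proposition~\ref{snprop}. In fact, when $\Theta_1(\pi)$ is expanded one gets a sum $\sum_{\rho\in S_n}\sgn(\rho)\,t^{\ell(\rho)}\,s_n\mu_\rho s_n$, where $\ell(\rho)$ counts the internal components removed during contraction. The crucial and nontrivial fact the paper verifies is that $\ell(\rho)=\ell(\rho)'$ for every $\rho$, i.e., that adding the new left-side edge does not change the number of purely right-labelled connected components; without this, the $t$-powers could differ and the equality of the coefficients would fail. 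After that, one still needs a three-case analysis on $\mu_\rho$ (equal to $\pi_L$, equal to $\pi'_L$, or outside $S_n\sqcup S_n^-$) to see that $\mt_n(s_n\mu_\rho s_n)=\mt_n(s_n\mu'_\rho s_n)$ term by term; only in the first two cases does your observation about $\mt_n$ agreeing on the two basis vectors actually get used. Finally, $\Theta_2$ is handled by a genuinely different argument in the paper: one shows $\Theta_2(\pi')=\Theta_2(\pi)$ \emph{on the nose}, because the relevant closed diagram built from $\pi'$ is a disjoint union of cycles, and deleting a single edge from a union of cycles does not change the partition into connected components. Your proposal treats $\Theta_1$ and $\Theta_2$ uniformly and would need to separately address why the $\Theta_2$ verification works; the cycle observation is the missing idea there.

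A minor but telling symptom: your suggestion to ``normalize $\pi_L$ to be exactly $x_n$'' cannot be carried out on $\pi$ itself, only on the isolated product $s_n\pi_Ls_n$ — because $\pi$ has cross-edges to the right half, conjugating the left part by a permutation changes those cross-edges and hence changes $\pi_R$, $\ell(\rho)$, etc. The paper wisely avoids any such normalization and works with the original $\sigma_i$.
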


\begin{proof} Suppose $\pi_L\in S_n^-$ so that $\pi_L=\sigma_i$ for some $\sigma\in S_n$ and $i\in\{1,\ldots, n\}$.  Let $\pi'\in P_{2n}$ be the partition diagram obtained from $\pi$ by adding an edge between the vertices labelled $i$ and $\sigma(i)'$.  Then $\pi'\geq \pi$ which implies   $\pi'_R\geq \pi_R$.  Also, $\pi'_L=\sigma\in S_n$.  Moreover, given $\tau\in S_n$, the connected components of the graph \begin{equation}\label{tdp}\includegraphics{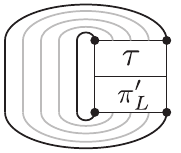}\end{equation} are all cycles.  Hence, as the graph of \begin{equation}\label{td}\includegraphics{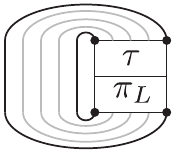}\end{equation} is obtained from (\ref{tdp}) by deleting one edge, the partitions of $\{1, 1',\ldots, n, n'\}$ corresponding to the 
connected components of (\ref{tdp}) and (\ref{td}) are equal.  Therefore, $$\includegraphics{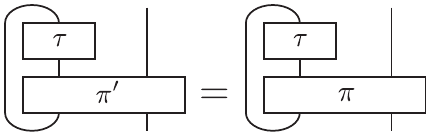}$$ for every $\tau\in S_n$.  Hence $$\includegraphics{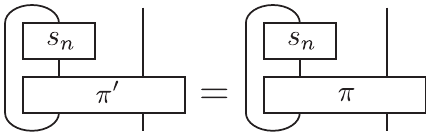}$$ which implies $\Theta_2(\pi')=\Theta_2(\pi)$.  

Now, fix $\rho\in S_n$ and let $\mu_\rho, \mu_\rho'\in P_n$, $\ell(\rho), \ell(\rho)'\in\Z_{\geq0}$ be such that $$\includegraphics{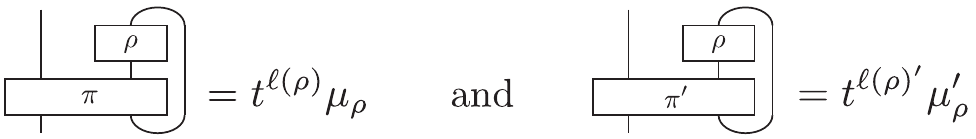}.$$  Notice that $\ell(\rho)$ (resp. $\ell(\rho)'$) is the number of connected components of the partition diagram $\pi(\id_n\otimes \rho)$ (resp. $\pi'(\id_n\otimes \rho)$) which only contain vertices labelled by integers greater than $n$.  The connected components of $\pi$ and $\pi'$ (and hence of  $\pi(\id_n\otimes \rho)$ and $\pi'(\id_n\otimes \rho)$) which only contain vertices labelled by integers greater than $n$ are identical.  Hence $\ell(\rho)=\ell(\rho)'$.  Also, it is easy to see that $\mu_\rho\geq\pi_L$.  Thus, as $\pi_L\in S_n^-$, there are three cases:  (i) $\mu_\rho=\pi_L$, (ii) $\mu_\rho=\pi'_L$, (iii) $\mu_\rho\not\in S_n\sqcup S_n^-$.  Next, we show that $\mt_n(s_n\mu_\rho s_n)=\mt_n(s_n\mu'_\rho s_n)$ in each of the three cases above:

\noindent (i) If $\mu_\rho=\pi_L=\sigma_i$ then it is easy to see that $\mu'_\rho=\pi'_L=\sigma$.  Hence, by Proposition~\ref{snprop}(1)\&(3) and the definition of $\mt_n$, $\mt_n(s_n\mu_\rho s_n)=\sgn(\sigma)=\mt_n(s_n\mu'_\rho s_n)$.

\noindent (ii) If $\mu_\rho=\pi'_L$ then $\mu'_\rho=\pi_L'$ too.  Hence $\mt_n(s_n\mu_\rho s_n)=\mt_n(s_n\mu'_\rho s_n)$.

\noindent (iii) If $\mu_\rho\not\in S_n\sqcup S_n^-$ then $\mu'_\rho\not\in S_n\sqcup S_n^-$ too.  Therefore, by Proposition~\ref{snprop}(2), $\mt_n(s_n\mu_\rho s_n)=0=\mt_n(s_n\mu'_\rho s_n)$.

As $\rho$ was an arbitrary element of $S_n$, we have  $$\mt_n(\Theta_1(\pi))=\sum_{\rho\in S_n}\sgn(\rho)t^{\ell(\rho)}\mt_n(s_n\mu_\rho s_n)=\sum_{\rho\in S_n}\sgn(\rho)t^{\ell(\rho)'}\mt_n(s_n\mu'_\rho s_n)=\mt_n(\Theta_1(\pi')).$$
The statement of the lemma with $\pi_L\in S_n^-$ follows.  The proof when $\pi_R\in S_n^-$ is similar.
\end{proof}  

Before proving the third and final lemma on $\Theta_1$ and $\Theta_2$ we need to introduce a bit more notation.  
Suppose $\pi\in P_{2n}$ is such that $\pi_L, \pi_R\in S_n$.  Let $I=I_\pi\subset\{1,\ldots, n\}$ denote the set of all $i\in\{1,\ldots, n\}$ which correspond to vertices in $\pi$ whose parts are of size two.  Now let $\pi_{L-R}=\sigma_I$ where $\sigma\in S_n$ is any permutation with $\sigma(i)=j$ whenever the top vertices labelled by $i$ and $n+j$ are in the same part of $\pi$.  For example, 
$$\includegraphics{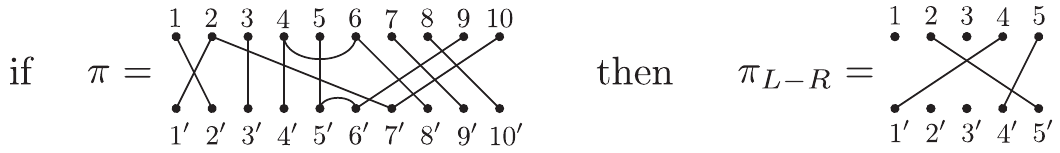}.$$
The following proposition concerning $\pi_{L-R}$ will be used in the proof of the final lemma on $\Theta_1$ and $\Theta_2$.

\begin{proposition}\label{piprop} Suppose $\pi\in P_{2n}$ is such that $\pi_L, \pi_R\in S_n$.  If $\pi'\in P_{2n}$ has $\pi'_L=\pi_L$, $\pi'_R=\pi_R$, and $\pi'_{L-R}=\pi_{L-R}$, then $\pi'=\pi$.
\end{proposition}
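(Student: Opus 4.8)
The plan is to show that the data $(\pi_L,\pi_R,\pi_{L-R})$ determines $\pi$ by recovering, one type at a time, every part of the partition $\pi$ of the $4n$ vertices $\{1,\dotsc,2n,1',\dotsc,(2n)'\}$. Recall that when $\pi_L,\pi_R\in S_n$, both $\pi_L$ and $\pi_R$ are honest permutation diagrams, so on the vertex set $\{1,1',\dotsc,n,n'\}$ the diagram $\pi_L$ tells us exactly which pairs $\{a,b'\}$ with $a,b\le n$ lie in a common part, and likewise $\pi_R$ does the same for the indices in $\{n+1,\dotsc,2n\}$ and their primed copies. So the only parts of $\pi$ left to pin down are those involving a ``left'' vertex together with a ``right'' vertex.

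First I would use the hypothesis $\pi_L\in S_n$ to observe that each top-left vertex $i$ ($1\le i\le n$) lies in a part of $\pi$ of size one or two: indeed $\pi_L\in S_n$ forbids a top-left vertex from being joined to another top-left vertex or to a bottom-left vertex, and more than two vertices in a part would force, after restriction, either a size-$>1$ part or two singleton top vertices in $\pi_L$, contradicting $\pi_L\in S_n$ (this is the same dichotomy already exploited in Proposition~\ref{snprop}(2) and Lemma~\ref{Thzero}). Symmetrically every top-right vertex, every bottom-left vertex, and every bottom-right vertex lies in a part of size at most two. Hence every part of $\pi$ has size one or two, and the ``crossing'' parts are exactly those counted by the edges between $\{1,\dotsc,n\}$ and $\{n+1,\dotsc,2n\}$ on the top, and between $\{1',\dotsc,n'\}$ and $\{(n+1)',\dotsc,(2n)'\}$ on the bottom. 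By the definition of $\pi_{L-R}=\sigma_I$, the set $I=I_\pi$ is precisely the set of top-left indices lying in a size-two part, i.e. the set of top-left vertices joined across to a top-right vertex; and for $i\notin I$ the value $\sigma(i)=j$ records that the top vertices $i$ and $n+j$ share a part of $\pi$.

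So from $\pi_{L-R}$ we recover: which top-left vertices are matched across the top (namely those not in $I$), and, for each such $i$, exactly which top-right vertex $n+\sigma(i)$ it is matched to. The remaining top-right vertices — those not of the form $n+\sigma(i)$ for $i\notin I$ — must then be the top-right vertices lying in a size-two part with a bottom vertex or a singleton, and their status is already determined by $\pi_R$. Thus all top parts of $\pi$ are determined by $(\pi_L,\pi_R,\pi_{L-R})$. The bottom parts are determined in exactly the same way: $\pi_L,\pi_R\in S_n$ force each bottom vertex into a part of size at most two, the bottom-left/bottom-right edges are what remain, and these are read off from $\pi_{L-R}$ by the ``bottom half'' of the same bookkeeping (or, equivalently, by applying the top argument to the diagram obtained by flipping $\pi$ upside down, under which $\pi_L,\pi_R,\pi_{L-R}$ transform predictably). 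Assembling: every part of $\pi$ is either a part of $\pi_L$, a part of $\pi_R$, or one of the cross edges specified by $\pi_{L-R}$, and conversely each of these is forced; so if $\pi'$ has the same three invariants then $\pi'$ and $\pi$ have the same parts, i.e. $\pi'=\pi$.

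The main obstacle I anticipate is purely bookkeeping: one must check carefully that a top-right vertex cannot simultaneously be ``claimed'' by $\pi_R$ (as belonging to a size-two part with a bottom-right vertex) and by $\pi_{L-R}$ (as matched across the top), and more generally that the three invariants partition the possible parts without overlap or gap. This follows cleanly once one has established that $\pi_L,\pi_R\in S_n$ forces all parts of $\pi$ to have size $\le 2$ and to be ``non-mixing'' on each side, so I would state that structural observation as the first step and let the rest be a short case check on the four classes of vertices. I expect no genuine difficulty beyond organizing this case analysis.
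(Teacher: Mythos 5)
Your structural claim---that ``every part of $\pi$ has size one or two''---is false, and the error propagates through the rest of the argument. If $\pi_L\in S_n$, then restricting $\pi$ to $\{1,1',\dotsc,n,n'\}$ gives a permutation diagram, which means every top-left vertex $i$ \emph{is} joined to a bottom-left vertex $j'$ inside $\pi$ (this is what a permutation edge is). You assert the opposite (``$\pi_L\in S_n$ forbids a top-left vertex from being joined to \dots a bottom-left vertex''), and from that you infer a size-$\leq2$ dichotomy that does not hold. The generic interesting case is a part of size four $\{i,j',n+k,(n+l)'\}$ with $i,j\leq n$: its restriction to the left half is the pair $\{i,j'\}$ and to the right half is $\{n+k,(n+l)'\}$, both legitimate permutation edges, so $\pi_L,\pi_R\in S_n$ is perfectly consistent with size-four parts. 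The correct statement, and the one the paper uses, is that each nonempty part of $\pi$ is of one of three types: a pair $\{i,j'\}$ with $i,j\leq n$; a pair $\{i,j'\}$ with $n<i,j\leq 2n$; or a quadruple $\{i,j',k,l'\}$ with $i,j\leq n<k,l$.

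The mistake matters for the reconstruction step. You decompose $\pi$ into ``a part of $\pi_L$, a part of $\pi_R$, or one of the cross edges specified by $\pi_{L-R}$,'' treating these as disjoint parts. But in a size-four part, the left pair, the right pair, and the top cross edge are all fragments of \emph{one} part; taking them as separate blocks would give a finer partition than $\pi$, so your reconstruction does not actually produce $\pi$. The fix is short: once you have the three-type classification above, $\pi_L$ determines all left pairs, $\pi_R$ all right pairs, and $\pi_{L-R}$ records exactly which left pair is merged with which right pair into a size-four part (the set $I$ identifies the unmerged left indices, and $\sigma$ identifies the matching for the merged ones). That data clearly determines $\pi$, which is precisely the paper's one-line proof. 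I would replace your first structural paragraph with the size-$2$/size-$4$ classification and then your concluding assembly paragraph goes through, reworded so that the three invariants are read as instructions for \emph{merging} rather than as a list of parts.
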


\begin{proof} Since $\pi_L, \pi_R\in S_n$, each part of $\pi$ is of one of the following three types:  (i) $\{i, j'\}$ with $1\leq i, j\leq n$; (ii) $\{i, j'\}$ with $n<i, j\leq 2n$; (iii) $\{i, j', k, l'\}$ with $1\leq i, j\leq n$ and $n<k, l\leq 2n$.  Hence $\pi$ is completely determined by $\pi_L, \pi_R$, and $\pi_{L-R}$.
\end{proof}

\begin{lemma}\label{lemmaSn} If $\pi\in P_{2n}$ is such that $\pi_L, \pi_R\in S_n$, then $\Theta_1(\pi)=\Theta_2(\pi)$.
\end{lemma}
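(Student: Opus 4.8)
The plan is to show $\Theta_1(\pi) = \Theta_2(\pi)$ by reducing both sides to a single common expression. Recall that $\Theta_1(\pi)$ is obtained by capping off $\pi$ with $s_n \otimes s_n$ and then closing up the "right" strands (via $\id_n \otimes \rho$ summed over $\rho \in S_n$, weighted by $\sgn(\rho)$ and a power of $t$), while $\Theta_2(\pi)$ closes up differently. Since $\pi_L, \pi_R \in S_n$, Proposition~\ref{piprop} tells us $\pi$ is completely determined by the three pieces $\pi_L, \pi_R, \pi_{L-R}$, and each part of $\pi$ has one of the three explicit types listed in the proof of that proposition. So the first step is to write $\pi$ explicitly in terms of $\pi_L$, $\pi_R$, and the data $\pi_{L-R} = \sigma_I$, and then compute how the closure operation in $\Theta_1$ interacts with the sandwiching by $s_n$'s.

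The key computation is as follows. When we form $\Theta_1(\pi)$ by summing $\sgn(\rho) t^{\ell(\rho)} s_n \mu_\rho s_n$ over $\rho \in S_n$ (in the notation of the proof of Lemma~\ref{lemmaSTP}), Proposition~\ref{snprop}(2) kills every term with $\mu_\rho \notin S_n \sqcup S_n^-$, and Proposition~\ref{snprop}(1)\&(3) collapse the surviving terms: each $s_n \mu_\rho s_n$ becomes $\sgn(\cdot)$ times either $s_n$ or $s_n x_n s_n$, with an accompanying power of $t$ from closed loops. So $\Theta_1(\pi)$ is a linear combination $a\, s_n + b\, s_n x_n s_n$ where $a$ and $b$ are explicit polynomials in $t$ determined by counting, over $\rho \in S_n$, the cycle structure of the graph obtained by plugging $\rho$ into the right side of $\pi$ against $\pi_R^{-1}$ (roughly), together with the interaction with $\pi_{L-R}$. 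I would carry out this count and get closed formulas for $a$ and $b$ purely in terms of $\pi_L, \pi_R, \pi_{L-R}$. Then I would run the identical computation for $\Theta_2(\pi)$, which closes the strands on the other side; by the left-right symmetry inherent in the partition-diagram picture (transposing the diagram top-to-bottom, or reflecting left-to-right, interchanges the roles of the two closure operations while fixing the sandwiching $s_n \otimes s_n$ up to the sign relations of Proposition~\ref{snprop}(1)), the resulting coefficients $a'$ and $b'$ are given by the same formulas evaluated on the mirrored data, which one checks equals $(a,b)$.

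A cleaner route, which I would try first: exploit that $\uRep(S_t)$ has a symmetric braiding and a pivotal/ribbon structure, so that $\Theta_1$ and $\Theta_2$ are literally the left and right partial-trace-type closures of the same morphism $(s_n \otimes s_n)\pi(s_n \otimes s_n) \in \End(M_n \otimes M_n)$. In a symmetric ribbon category the categorical trace is itself symmetric, so if one can package $\Theta_1(\pi) - \Theta_2(\pi)$ as $\tr_L(h) - \tr_R(h)$ for a suitable $h$ — or directly identify each $\Theta_i$ as a partial trace of $\pi$ post-composed with fixed morphisms — then the diagrammatic calculus for ribbon categories (isotopic diagrams give equal morphisms, which in a symmetric category means we may freely slide the closure from one side to the other) finishes it, because $\pi_L, \pi_R \in S_n$ means the strands being closed are honest permutation strands that can be isotoped around. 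I expect the main obstacle to be bookkeeping: matching the partition-diagram picture used in \cite{CO1} (with its "compose down the page" convention) against the ribbon graphical calculus carefully enough that the isotopy argument is rigorous, in particular tracking the powers of $t$ coming from closed loops on both sides and confirming they agree. If that identification proves awkward, I fall back on the explicit coefficient computation of $a, b, a', b'$ described above, where the equality $a = a'$, $b = b'$ reduces to a combinatorial identity about counting cycles in graphs built from $\pi_L, \pi_R \in S_n$ and $\pi_{L-R}$, which is the genuinely computational heart of the lemma.
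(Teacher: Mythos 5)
Your proposed ``cleaner route'' rests on a false premise. In a symmetric ribbon category it is the \emph{full} categorical trace that is cyclic; the two \emph{partial} traces $\tr_L(h)$ and $\tr_R(h)$ of an endomorphism $h\in\End_\cat(V\otimes V)$ are in general distinct elements of $\End_\cat(V)$ (take $h=\Id_V\otimes e$ for an idempotent $e$: then $\tr_L(h)=\dim(V)\,e$ while $\tr_R(h)=\tr_\cat(e)\,\Id_V$). Indeed, if left and right partial traces agreed for free, every linear functional on $\End_\cat(M_n)$ would be ambidextrous and Theorem~\ref{amb} would be vacuous. So there is no diagrammatic isotopy that ``slides the closure from one side to the other''; the hypothesis $\pi_L,\pi_R\in S_n$ must be used in an essential way, not merely to guarantee ``honest permutation strands.'' Your appeal to reflecting the diagram left-to-right also does not close the loop: reflection sends $\pi$ to a generally different diagram, so it would at best relate $\Theta_1$ of one diagram to $\Theta_2$ of another, not establish $\Theta_1(\pi)=\Theta_2(\pi)$.

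Your fallback route is consistent with what a proof could look like, but it is not carried out: you reduce the lemma to ``a combinatorial identity about counting cycles,'' acknowledge this is ``the genuinely computational heart of the lemma,'' and stop there. That leaves exactly the content of the lemma unproved. The paper's actual argument avoids the count entirely and is worth internalizing: from $\pi$ (and its decomposition into $\pi_L,\pi_R,\pi_{L-R}$) one builds two auxiliary diagrams $\mu,\mu'\in P_{2n}$ — essentially by ``straightening'' the permutation parts using Proposition~\ref{snprop}(1), which lets one pre/post-compose by elements of $S_n\otimes S_n$ at the cost of a sign that $s_n$ absorbs — so that $\Theta_1(\pi)=\Theta_1(\mu)$ and $\Theta_2(\pi)=\Theta_1(\mu')$. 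One then checks $\mu$ and $\mu'$ have the same $(\cdot)_L$, $(\cdot)_R$, $(\cdot)_{L-R}$ data and invokes Proposition~\ref{piprop} to conclude $\mu=\mu'$, hence $\Theta_1(\pi)=\Theta_2(\pi)$. You cite Proposition~\ref{piprop} only as a structural remark (``$\pi$ is completely determined by\ldots''); the missing idea is to \emph{apply} it as a uniqueness criterion to identify two a~priori different diagrams, which is what replaces the cycle-counting you deferred.
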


\begin{proof} Write $I=I_\pi$ as above and let $\sigma\in S_n$ be any permutation with $\pi_{L-R}=\sigma_I$.  Let $\mu, \mu'\in P_{2n}$ be the following partition diagrams
$$\includegraphics{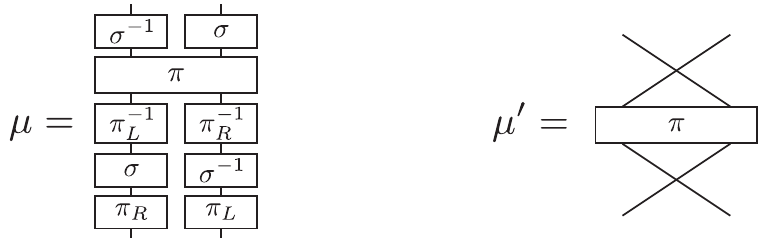}.$$ 
It is easy to check that $\mu_L=\pi_R=\mu'_L$, $\mu_R=\pi_L=\mu'_R$, and $\mu_{L-R}=(\sigma^{-1})_{J}=\mu'_{L-R}$ where $J=\{\sigma(i)~|~i\in I\}$.  Hence, by Proposition~\ref{piprop}, $\mu=\mu'$.  Also, by Proposition~\ref{snprop}(1), $\Theta_1(\pi)=\Theta_1(\mu)$. Therefore $\Theta_1(\pi)=\Theta_1(\mu)=\Theta_1(\mu')=\Theta_2(\pi)$.
\end{proof}

Now we prove the main result of this section.

\begin{theorem}\label{amb} $\mt_n$ is a nonzero ambidextrous trace on $M_n$ in $\uRep(S_t)$ for all $t\in F$, $n>0$.
\end{theorem}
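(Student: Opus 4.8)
The plan is to verify directly the defining identity of an ambidextrous trace, namely $\mt_n(\tr_L(h)) = \mt_n(\tr_R(h))$ for all $h \in \End_{\uRep(S_t)}(M_n \otimes M_n)$. Since $\End_{\uRep(S_t)}(M_n \otimes M_n) = (s_n \otimes s_n)FP_{2n}(t)(s_n \otimes s_n)$ is spanned by elements of the form $(s_n \otimes s_n)\pi(s_n \otimes s_n)$ for partition diagrams $\pi \in P_{2n}$, and both $\tr_L$ and $\tr_R$ are linear, it suffices to check the identity on such a spanning set. The first step is to identify $\mt_n(\tr_L((s_n \otimes s_n)\pi(s_n \otimes s_n)))$ with $\mt_n(\Theta_1(\pi))$ and $\mt_n(\tr_R((s_n \otimes s_n)\pi(s_n \otimes s_n)))$ with $\mt_n(\Theta_2(\pi))$ (up to the roles of $\Theta_1,\Theta_2$, which I will match against the conventions fixed in Section~\ref{0trace}); this is a diagrammatic computation using the explicit form of $\coev$, $\ev$, $\coev'$, $\ev'$ for $\uRep(S_t)$ together with the definition of the maps $\Theta_i$. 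Thus the theorem reduces to showing $\mt_n(\Theta_1(\pi)) = \mt_n(\Theta_2(\pi))$ for every partition diagram $\pi \in P_{2n}$.

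The second step is to dispose of this reduced claim by a case analysis on $\pi_L$ and $\pi_R$, exactly along the lines already prepared by Lemmas~\ref{Thzero}, \ref{lemmaSTP}, and \ref{lemmaSn}. If $\pi_L \notin S_n \sqcup S_n^-$ or $\pi_R \notin S_n \sqcup S_n^-$, then $\Theta_1(\pi) = 0 = \Theta_2(\pi)$ by Lemma~\ref{Thzero}, so the identity is trivial. Otherwise $\pi_L, \pi_R \in S_n \sqcup S_n^-$. If both $\pi_L$ and $\pi_R$ lie in $S_n$, then $\Theta_1(\pi) = \Theta_2(\pi)$ by Lemma~\ref{lemmaSn}, so again $\mt_n(\Theta_1(\pi)) = \mt_n(\Theta_2(\pi))$. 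The remaining case is when $\pi_L$ or $\pi_R$ lies in $S_n^-$; here I invoke Lemma~\ref{lemmaSTP} to replace $\pi$ by a diagram $\pi'$ with (say) $\pi'_L \in S_n$, $\pi'_R \geq \pi_R$, and $\mt_n(\Theta_i(\pi)) = \mt_n(\Theta_i(\pi'))$ for $i=1,2$. Applying the lemma once more if $\pi'_R$ is still in $S_n^-$, I arrive after at most two applications at a diagram $\pi''$ with $\pi''_L, \pi''_R \in S_n$ and $\mt_n(\Theta_i(\pi)) = \mt_n(\Theta_i(\pi''))$, and then Lemma~\ref{lemmaSn} finishes the job.

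Finally I must check that $\mt_n$ is nonzero, but this is immediate from its definition: $\mt_n(s_n) = 1 \neq 0$, and $s_n = \Id_{M_n}$ is a nonzero endomorphism of $M_n$. (Indeed $\md_{\mt_n}(M_n) = \mt_n(\Id_{M_n}) = 1$.)

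The main obstacle is the bookkeeping in the first step: carefully matching the topological trace maps $\tr_L, \tr_R$ on $\End(M_n \otimes M_n)$ — computed using the ribbon structure of $\uRep(S_t)$ with its symmetric braiding and the identity twist — against the combinatorially defined maps $\Theta_1, \Theta_2$, keeping straight the ``compose down the page'' convention and the fact that $M_n = ([n],s_n)$ is a retract of $[n]$ via the idempotent $s_n$. Once the dictionary $\tr_L \leftrightarrow \Theta_1$, $\tr_R \leftrightarrow \Theta_2$ is in place, everything else is an assembly of the three lemmas already proved; the genuinely delicate combinatorial content (the interplay of cycle structures of $\pi$ with deletion of edges and with composition by permutations $\rho \in S_n$) has been isolated into Lemma~\ref{lemmaSTP} and does not need to be revisited.
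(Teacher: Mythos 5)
Your proof follows essentially the same route as the paper: interpret $\mt_n(\tr_L)$, $\mt_n(\tr_R)$ via $\mt_n\circ\Theta_1$, $\mt_n\circ\Theta_2$, reduce by linearity to partition diagrams $\pi\in P_{2n}$, and run a case analysis on $\pi_L,\pi_R$ using Lemmas~\ref{Thzero}, \ref{lemmaSTP}, and \ref{lemmaSn}. The one place you are slightly too quick is the claim that after at most two applications of Lemma~\ref{lemmaSTP} you reach $\pi''$ with $\pi''_L,\pi''_R\in S_n$. That is not automatic: after the first application produces $\pi'$ with $\pi'_L\in S_n$ and $\pi'_R\geq\pi_R$, the diagram $\pi'_R$ may have left $S_n\sqcup S_n^-$ entirely (coarsening an $S_n^-$ diagram can create a part of size $3$), and then Lemma~\ref{lemmaSTP} is inapplicable on the right side; you must instead fall back to Lemma~\ref{Thzero} to get $\mt_n(\Theta_i(\pi))=\mt_n(\Theta_i(\pi'))=0$. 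Likewise, after the second application you have $\pi''_R\in S_n$ and $\pi''_L\geq\pi'_L$, and you need the extra observation that $\pi''_L\geq\pi'_L\in S_n$ forces $\pi''_L\notin S_n^-$ (every part has size $\geq 2$), so that the only remaining possibilities are $\pi''_L\in S_n$ (Lemma~\ref{lemmaSn}) or $\pi''_L\notin S_n\sqcup S_n^-$ (Lemma~\ref{Thzero} again). With those two fallback cases supplied — which is exactly what the paper does, and why Lemma~\ref{lemmaSTP} records the condition $\pi'_R\geq\pi_R$ in the first place — your argument is complete and matches the paper's.
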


\begin{proof} We are required to show $\mt_n(\Theta_1(\pi))=\mt_n(\Theta_2(\pi))$ for all $\pi\in P_{2n}$.  If either $\pi_L$ or $\pi_R$ is not in $S_n\sqcup S_n^-$ then the result follows from Lemma~\ref{Thzero}.  Hence we can assume $\pi_L, \pi_R\in S_n\sqcup S_n^-$.  If $\pi_L\in S_n^-$ then by Lemma~\ref{lemmaSTP} there exists $\pi'\in P_{2n}$ with $\pi'_L\in S_n$ and $\mt_n(\Theta_i(\pi'))=\mt_n(\Theta_i(\pi))$ for $i=1, 2$.  If $\pi'_R\not\in S_n\sqcup S_n^-$ then by Lemma~\ref{Thzero} we have $\mt_n(\Theta_i(\pi))=\mt_n(\Theta_i(\pi'))=0$ for $i=1, 2$; hence we can assume $\pi'_R\in S_n\sqcup S_n^-$.  If $\pi'_R\in S_n^-$ then by Lemma~\ref{lemmaSTP} there exists $\pi''\in P_{2n}$ with $\pi''_R\in S_n$, $\pi''_L\geq \pi'_L$, and $\mt_n(\Theta_i(\pi''))=\mt_n(\Theta_i(\pi'))$ for $i=1, 2$.  If $\pi''_L\not\in S_n\sqcup S_n^-$ then by Lemma~\ref{Thzero} we have $\mt_n(\Theta_i(\pi))=\mt_n(\Theta_i(\pi''))=0$ for $i=1, 2$; hence we can assume $\pi''_L\in S_n\sqcup S_n^-$.  Also, since $\pi''_L\geq \pi'_L$ and $\pi'_L\in S_n$ it follows that $\pi''_L\not\in S_n^-$.  Thus, we can assume $\pi''_L\in S_n$.  In this case, by Lemma~\ref{lemmaSn}, $\mt_n(\Theta_1(\pi))=\mt_n(\Theta_1(\pi''))=\mt_n(\Theta_2(\pi''))=\mt_n(\Theta_2(\pi)).$
\end{proof}

\begin{corollary}\label{C:nonzerotrace} If $t\in\Z_{\geq 0}$ then there exists a nonzero trace on the ideal of negligible objects in $\uRep(S_t)$.
\end{corollary}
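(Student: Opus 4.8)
The plan is to read off the corollary from Theorem~\ref{amb}, Theorem~\ref{T:ambitrace}, Corollary~\ref{Mneg}, and Lemma~\ref{L:ideals}. Fix $t\in\Z_{\geq 0}$ and set $n=t+1$. Then $t$ and $n$ are nonnegative integers with $t<n$, so by Corollary~\ref{Mneg} the object $M_{t+1}$ is negligible, i.e.\ it lies in $\ideal=\ideal(\mN)$; it is moreover nonzero by Proposition~\ref{decM}. By Theorem~\ref{amb} the linear map $\mt_{t+1}\colon\End_{\uRep(S_t)}(M_{t+1})\to F$ is a nonzero ambidextrous trace on $M_{t+1}$, and since $M_{t+1}=([t+1],s_{t+1})$ we have $\mt_{t+1}(\id_{M_{t+1}})=\mt_{t+1}(s_{t+1})=1$. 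Applying Theorem~\ref{T:ambitrace} with $J=M_{t+1}$ then produces a trace on the ideal $\ideal_{M_{t+1}}$ determined by $\mt_{t+1}$, and this trace is nonzero because its value on $\id_{M_{t+1}}$ equals $1$.

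It then remains to identify $\ideal_{M_{t+1}}$ with the ideal $\ideal(\mN)$ of negligible objects. On the one hand, since $M_{t+1}$ is negligible and $\mN$ is a tensor ideal, every retract of $M_{t+1}\otimes X$ is again negligible, so $\ideal_{M_{t+1}}\subseteq\ideal(\mN)$; in particular $\ideal_{M_{t+1}}$ is a proper ideal, being nonzero yet contained in the proper ideal $\ideal(\mN)$. On the other hand, by the Comes--Ostrik classification of tensor ideals \cite{CO2}, $\mN$ is the unique proper tensor ideal of $\uRep(S_t)$ when $t\in\Z_{\geq 0}$. Hence Lemma~\ref{L:ideals}(5) applies and forces $\ideal_{M_{t+1}}=\ideal(\mN)$. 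The trace constructed above is therefore a nonzero trace on the ideal of negligible objects in $\uRep(S_t)$, which is exactly the assertion of the corollary.

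Essentially all of the real work here is already contained in Theorem~\ref{amb}; the only extra input is the uniqueness of the proper tensor ideal of $\uRep(S_t)$, which is precisely what lets the ambidextrous trace on the single negligible object $M_{t+1}$ propagate to \emph{every} negligible object rather than only to the sub-ideal it generates. If one wished to avoid citing \cite{CO2}, the alternative would be to verify directly that each negligible indecomposable $L(\lambda)$ occurs as a retract of $M_{t+1}\otimes X$ for a suitable object $X$; but this is more laborious, and invoking the classification makes it unnecessary.
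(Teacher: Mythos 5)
Your proof is correct and follows essentially the same route as the paper: Theorem~\ref{amb} and Theorem~\ref{T:ambitrace} give a nonzero trace on $\ideal_{M_n}$ (with $n=t+1$), Corollary~\ref{Mneg} places $M_n$ in the ideal of negligibles, and the uniqueness of the proper tensor ideal together with Lemma~\ref{L:ideals} forces $\ideal_{M_n}=\ideal(\mN)$. The only cosmetic difference is that you invoke Lemma~\ref{L:ideals}(5) directly after noting $\ideal_{M_{t+1}}$ is proper, whereas the paper threads the argument through the $J(\cdot)$ construction in parts (2)--(4) before concluding; both are valid instantiations of the same lemma.
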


\begin{proof} This follows from Theorem~\ref{T:ambitrace} and Theorem \ref{amb} that there is a nonzero trace on $\ideal_{M_{n}}$. By Corollary~\ref{Mneg} we have that $M_{n}$ is an object in $\ideal = \ideal (\mN )$ and, hence, $\ideal_{M_{n}} \subseteq \ideal$.  By Lemma~\ref{L:ideals} we have $J(\ideal_{M_{n}})$ is not the zero ideal and $J(\ideal_{M_{n}}) \subseteq J(\ideal)$.  By the classification of tensor ideals in $\uRep(S_t)$, $\mN$ is the unique proper tensor ideal of $\uRep(S_t)$.  Thus $J(\ideal_{M_{n}}) = J(\ideal)$ and by Lemma~\ref{L:ideals} $\ideal_{M_{n}}=\ideal$.  This proves the trace function is in fact defined on the entire ideal of negligible objects in $\uRep(S_t)$.
\end{proof}
\begin{remark}\label{R:SemisimpleCase}  When $t \notin \Z_{\geq 0}$ then $\uRep (S_{t})$ is a semisimple category \cite[Th\'eor\`eme 2.18]{Del07}.  Consequently, there are no proper ideals in $\uRep (S_{t})$ and the categorical trace is the only nontrivial trace. 
\end{remark}

\section{A Graded Variation on Deligne's Category}\label{S:quantumdouble} In this section we briefly examine a graded version of Deligne's category and show that it can be used to recover the writhe of a knot -- a well-known invariant of framed knots.

\subsection{}\label{SS:gradedvariant}   

Fix $t,q \in F $ with $q \neq 0$.  We then let $\gruRep_{0}(S_{t})=\gruRep_{0}(S_{t})_{q}$ be the category defined as follows.  The objects are all pairs $[a,b]$ for all $a,b \in \Z_{\geq 0}$.  We put a $\Z$-grading on the objects of the category by setting the degree of $[a,b]$ to be $a-b$.   

The morphisms are given by 
\[
\Hom_{\gruRep_{0}(S_{t})}\left([a,b],[c,d] \right) = \begin{cases}  FP_{a+b,c+d} , &\text{when $a-b=c-d$}; \\
0, &\text{else}.
\end{cases}
\]  The composition of morphisms is given by the same vertical concatenation of diagrams rule as in the definition of Deligne's category $\uRep_{0}(S_{t})$.  By definition the morphisms preserve the $\Z$-grading.

Define the tensor product on $\gruRep_{0}(S_{t})$ by 
\[
[a,b] \otimes [c,d] = [a+c, b+d]
\] and on morphisms by horizontal concatenation of diagrams just as in Deligne's category $\uRep_{0}(S_{t})$.   The associativity constraint is given by the identity.  

The unit object is then $\unit = [0,0]$ and the unit constraints $[0,0] \otimes [a,b] \to [a,b]$ and $[a,b] \otimes [0,0] \to [a,b]$ are given by the identity.

The dual of the object $[a,b]$ is given by
\[
[a,b]^{*}=[b,a].
\]  The evaluation morphism $\ev : [a,b]^{*} \otimes [a,b] \to \unit$ is given by a diagram in $FP_{2a+2b,0}$ which gives the evaluation morphism $[a+b]^{*} \otimes [a+b] \to \unit$ in $\uRep_{0}(S_{t})$.  Similarly, the coevaluation morphism $\coev : \unit  \to [a,b] \otimes [a,b]^{*}$ is given by the coevaluation in $\uRep_{0}(S_{t})$.

Let $\beta_{n,m}: [n] \otimes [m] \to [m] \otimes [n]$ be the diagram in $FP_{m+n, m+n}$ which gives the braiding in $\uRep_{0}(S_{t})$.  The braiding on $\gruRep_{0}(S_{t})$,
\[
c_{[a,b],[c,d]}: [a,b] \otimes [c,d] \to [c,d] \otimes [a,b],
\] is then given by 
\[
c_{[a,b],[c,d]} = q^{(a-b)(c-d)}\beta_{a+b,c+d}.
\]  The fact that this gives a braiding on the category follows from the calculation which shows that the $\beta$'s define a braiding in $\uRep_{0}(S_{t})$ along with the fact that morphisms in $\gruRep_{0}(S_{t})$ are grading preserving.  Also note that this braiding is usually \emph{not} symmetric.

Finally, the twist morphisms $\theta_{[a,b]} : [a,b] \to [a,b]$ are given by 
\[
\theta_{[a,b]}= q^{(a-b)^{2}}\Id_{a+b},
\] where $\Id_{a+b}$ is the identity in $FP_{a+b}$.  

A direct verification of the axioms shows that the above tensor product, unit, duality, braiding, and twists make $\gruRep_{0}(S_{t})$ into a ribbon category.  

Write $\gruRep (S_{t})$ for the Karoubian envelope of the additive envelope of $\gruRep_{0}(S_{t})$.  
The ribbon category structure on  $\gruRep_{0}(S_{t})$ defines a ribbon category structure on $\gruRep (S_{t})$ just as it does going from $\uRep_{0}(S_{t})$ to $\uRep (S_{t})$.  We also note that using the definition of the additive and Karoubian envelopes we see that the category $\gruRep (S_{t})$ inherits a $\Z$-grading and all morphisms are grading preserving. 

We have the following ``degrading'' functor between the graded and ungraded versions of Deligne's category.

\begin{proposition}\label{P:RelatingCats} Let $q \in  F \backslash \{0 \}$.
 Then there is a faithful functor 
\[
\F: \gruRep(S_{t})_{q} \to \uRep(S_{t}).
\]

This functor is induced by the functor
\[
\F_{0}: \gruRep_{0}(S_{t})_{q} \to \uRep_{0}(S_{t})
\] given by setting
\[
\F_{0}([a,b])=[a+b]
\]
for all $a,b \in \Z_{\geq 0}$.  On morphisms, $\F_{0}$ is the identity; that is, for 
\[
f \in \Hom_{\gruRep_{0}(S_{t})}([a,b], [c,d]) \subseteq FP_{a+b,c+d},
\]
we set 
\[
\F_{0}(f) = f: [a+b] \to [c+d].
\] 

\end{proposition}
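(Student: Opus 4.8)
The plan is to verify directly that $\F_0$ as described is a well-defined functor of ribbon categories, and then to observe that it extends canonically to the Karoubian-additive envelopes. The key structural fact making all of this work is that the morphism spaces in $\gruRep_0(S_t)_q$ are literally subspaces of (or equal to) the morphism spaces in $\uRep_0(S_t)$, obtained by forgetting the bipartite labelling $[a,b]$ and remembering only the total $a+b$; composition and tensor product of morphisms in both categories are given by the identical diagrammatic concatenation rules. So on morphisms $\F_0$ is genuinely the identity map, and the only thing to check at the level of $\F_0$ is that it respects composition, identities, and the structural isomorphisms.

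First I would check functoriality: given composable morphisms $f \in \Hom_{\gruRep_0}([a,b],[c,d])$ and $g \in \Hom_{\gruRep_0}([c,d],[e,f])$, the grading constraint forces $a-b=c-d=e-f$, so $\F_0(g)\circ\F_0(f)$ is computed by the same vertical concatenation in $\uRep_0(S_t)$ that defines $g\circ f$ in $\gruRep_0(S_t)_q$; likewise $\F_0(\Id_{[a,b]}) = \Id_{a+b} = \Id_{[a+b]}$. Next I would check compatibility with the tensor structure: $\F_0([a,b]\otimes[c,d]) = \F_0([a+c,b+d]) = [a+b+c+d] = [a+b]\otimes[c+d] = \F_0([a,b])\otimes\F_0([c,d])$, and on morphisms both tensor products are horizontal concatenation, so $\F_0$ is (strictly) monoidal, the associativity and unit constraints in both categories being identities. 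The duality is handled the same way: $\F_0([a,b]^*) = \F_0([b,a]) = [a+b] = [a+b]^* = \F_0([a,b])^*$, and the evaluation/coevaluation morphisms of $\gruRep_0(S_t)_q$ are \emph{by definition} the diagrams giving $\ev$ and $\coev$ in $\uRep_0(S_t)$, so they are preserved on the nose.

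The one genuinely substantive point — and the step I expect to be the main obstacle — is the braiding and the twist, since these carry the scalar factors $q^{(a-b)(c-d)}$ and $q^{(a-b)^2}$. Here I would use the hypothesis $q \neq 0$, but note that $\F_0$ need \emph{not} be a braided functor and indeed cannot be, since $\uRep_0(S_t)$ is symmetric while $\gruRep_0(S_t)_q$ is not. The resolution is that $\F$ is asserted only to be a faithful functor, not a braided or ribbon functor; so for this statement I only need $\F_0$ to be a well-defined faithful functor of categories compatible with the tensor/duality data, which the preceding paragraphs establish. (If one did want the scalar-twisted compatibility recorded, one would note $\F_0(c_{[a,b],[c,d]}) = q^{(a-b)(c-d)}\beta_{a+b,c+d} = q^{(a-b)(c-d)}\,c^{\uRep}_{[a+b],[c+d]}$ and $\F_0(\theta_{[a,b]}) = q^{(a-b)^2}\,\theta^{\uRep}_{[a+b]} = q^{(a-b)^2}\Id$, but this is not needed here.) Faithfulness of $\F_0$ is immediate: it is injective on each Hom-space because it is the identity map on $FP_{a+b,c+d}$ whenever the source and target live in the correct grading, and sends the whole Hom-space to zero only when that Hom-space is already zero.

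Finally, I would invoke the universal property of the additive envelope followed by that of the Karoubian (idempotent-completion) envelope: any additive (resp.\ additive and idempotent-respecting) functor out of $\gruRep_0(S_t)_q$ into an additive Karoubian category — and $\uRep(S_t)$ is such — extends uniquely up to natural isomorphism to $\gruRep(S_t)_q = (\gruRep_0(S_t)_q)^{\mathrm{add},\mathrm{kar}}$. Applying this to $\F_0$ composed with the inclusion $\uRep_0(S_t) \hookrightarrow \uRep(S_t)$ yields the desired $\F$. Faithfulness is preserved under both envelope constructions (a morphism in the envelope is a matrix of morphisms of the base category, and $\F$ acts entrywise via $\F_0$, which is faithful), so $\F$ is faithful. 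This completes the argument; the only real content beyond bookkeeping is recognizing that the grading constraint on morphisms is exactly what forces $\F_0$ to be composition-preserving and that faithfulness — rather than the stronger property of being a ribbon functor — is all that is claimed.
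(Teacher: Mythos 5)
Your proposal is correct and takes essentially the same approach as the paper's (very terse) proof: both pass $\F_0$ through the additive and Karoubian envelope constructions and deduce faithfulness of $\F$ from injectivity of $\F_0$ on morphisms. You simply spell out the routine verifications that the paper leaves implicit, and your observation that $\F$ is asserted only to be faithful (not braided or ribbon) correctly identifies why the $q$-scalars in the braiding and twist cause no trouble.
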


\begin{proof} The construction of the additive and Karoubian envelopes shows that $\F_{0}$ induces a functor $\F:\gruRep(S_{t})_{q} \to \uRep(S_{t})$. The statement about injectivity on morphisms follows from the fact that $\F_{0}$ is injective on morphisms and the construction of $\F$.
\end{proof}

\begin{remark}\label{R:degrading}  It is straightforward to verify that $\F$ is a tensor functor (i.e.\ $\F(X \otimes Y) = \F(X) \otimes \F(Y)$ for all objects $X$ and $Y$, $\F(\unit ) = \unit$, and preserves the associativity and unit constraints) and that it preserves duals (i.e.\ $\F(X^{*})=\F(X)^{*}$ for all objects $X$ and preserves the evaluation and coevaluation morphisms).  If $V,W$ are homogeneous objects in $\gruRep(S_{t})_{q}$ with $V$ of degree $r \in \Z$ and $W$ of degree $s \in \Z$, then 
\[
\F(c_{V,W}) = q^{r  s} c_{\F(V), \F(W)},
\] where $c_{\F(V), \F(W)}$ is the braiding in the category $\uRep(S_{t})$.  Similarly,
\[
\F(\theta_{V}) = q^{r^{2}}\theta_{\F(V)},
\] where $\theta_{\F(V)}$ is the twist in the category $\uRep(S_{t})$.

Thus the functor $\F$ preserves braidings and twists if and only if $q=1$ but, as we will soon see, it is close enough for our purposes.
\end{remark}

\begin{theorem}\label{T:gradedambis}  Let $V$ be an object of $\gruRep(S_{t})_{q}$ such that $\F(V)$ admits an ambidextrous trace 
\[
\mt_{\F(V)}: \End_{\uRep(S_{t})}(\F(V)) \to F .
\]  Then the map 
\[
\mt: \End_{\gruRep(S_{t})_{q}}(V) \to F 
\] given by $g \mapsto \mt_{\F(V)}(\F(g))$ defines an ambidextrous trace on $V$.
\end{theorem}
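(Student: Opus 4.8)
The plan is to verify that the candidate map $\mt : \End_{\gruRep(S_{t})_{q}}(V) \to F$, $g \mapsto \mt_{\F(V)}(\F(g))$, satisfies the defining property of an ambidextrous trace: $\mt(\tr_{L}(h)) = \mt(\tr_{R}(h))$ for all $h \in \End_{\gruRep(S_{t})_{q}}(V \otimes V)$. First I would observe that $\mt$ is linear since $\F$ is a functor (hence additive) and $\mt_{\F(V)}$ is linear. The heart of the argument is to compare $\tr_{L}$ and $\tr_{R}$ computed in the graded category $\gruRep(S_{t})_{q}$ with those computed in $\uRep(S_{t})$ after applying $\F$. Using Remark~\ref{R:degrading}, $\F$ is a tensor functor preserving duals and the evaluation/coevaluation morphisms, so the only discrepancy between the two graphical calculi comes from the braiding and twist, which appear in the definitions of $\coev'_{V}$ and $\ev'_{W}$ entering \eqref{E:trL} and \eqref{E:trR}.

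The key step is a bookkeeping computation of these scalar corrections. Write $r \in \Z$ for the degree of $V$ (we may assume $V$ is homogeneous; the general case follows by additivity, splitting $V$ into its homogeneous components and using that $\End(V\otimes V)$ decomposes accordingly). From Remark~\ref{R:degrading}, $\F(c_{V,V^{*}}) = q^{r(-r)} c_{\F(V),\F(V)^{*}} = q^{-r^{2}} c_{\F(V),\F(V)^{*}}$ and $\F(\theta_{V}) = q^{r^{2}} \theta_{\F(V)}$. Hence in the formula $\coev'_{V} = (\Id_{V^{*}} \otimes \theta_{V}) \circ c_{V,V^{*}} \circ \coev_{V}$, applying $\F$ gives $\F(\coev'_{V}) = q^{r^{2}} q^{-r^{2}} \coev'_{\F(V)} = \coev'_{\F(V)}$, and similarly $\F(\ev'_{V}) = \ev'_{\F(V)}$: the $q$-powers from the twist and the braiding cancel exactly. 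Therefore, for any $h \in \End_{\gruRep(S_{t})_{q}}(V \otimes V)$, applying the tensor functor $\F$ to the right-hand side of \eqref{E:trL} yields $\F(\tr_{L}(h)) = \tr_{L}(\F(h))$ in $\uRep(S_{t})$, and likewise $\F(\tr_{R}(h)) = \tr_{R}(\F(h))$.

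Granting this, the conclusion is immediate:
\[
\mt(\tr_{L}(h)) = \mt_{\F(V)}\bigl(\F(\tr_{L}(h))\bigr) = \mt_{\F(V)}\bigl(\tr_{L}(\F(h))\bigr) = \mt_{\F(V)}\bigl(\tr_{R}(\F(h))\bigr) = \mt_{\F(V)}\bigl(\F(\tr_{R}(h))\bigr) = \mt(\tr_{R}(h)),
\]
where the middle equality uses that $\mt_{\F(V)}$ is an ambidextrous trace on $\F(V)$ and that $\F(h) \in \End_{\uRep(S_{t})}(\F(V) \otimes \F(V))$ by the tensor functor property. This proves $\mt$ is an ambidextrous trace on $V$.

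I expect the main obstacle to be the careful verification that the $q$-power corrections from the braiding and the twist genuinely cancel in $\coev'$ and $\ev'$ — that is, getting the signs of the exponents right (the braiding $c_{V,V^{*}}$ contributes $q^{r \cdot (-r)}$ because $V^{*}$ has degree $-r$, while the twist $\theta_{V}$ contributes $q^{+r^{2}}$), and making sure the same cancellation occurs for $\ev'$ where the twist sits on the other tensor factor. A secondary point requiring care is the reduction to homogeneous $V$: one must check $\F$ interacts well with the additive decomposition and that $\tr_{L}, \tr_{R}$ are additive in the appropriate sense, but this is routine given that all morphisms in $\gruRep(S_{t})_{q}$ are grading-preserving.
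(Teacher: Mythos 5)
Your proof is correct and follows essentially the same route as the paper: reduce to homogeneous $V$, observe that the $q^{r^2}$ from the twist and the $q^{-r^2}$ from the braiding $c_{V,V^*}$ cancel in $\F(\coev'_V)$ and $\F(\ev'_V)$, so $\F(\tr_L(h)) = \tr_L(\F(h))$ and $\F(\tr_R(h)) = \tr_R(\F(h))$, and then invoke ambidexterity of $\mt_{\F(V)}$. The only difference is that you spell out the exponent bookkeeping that the paper dispatches with ``a calculation using Remark~\ref{R:degrading}.''
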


\begin{proof} Since both $\F$ and $\mt_{\F(V)}$ are linear, the map $\mt$ is linear and so without loss of generality we may assume $V$ is homogenous of degree $d \in \Z$.  Let $h \in \End_{\gruRep(S_{t})_{q}}(V \otimes V)$ and consider the morphism $\Tr_{R}(h): V \to V$.  Since $\F$ is a tensor functor, preserves evaluation and coevaluation, and takes the braiding and twist to a $q$ multiple of the braiding and twist, it follows that $\F(\Tr_{R}(h))$ is a $q$-multiple of $\Tr_{R}(\F(h))$.  A calculation using Remark~\ref{R:degrading} shows that in fact $\F(\Tr_{R}(h))=\Tr_{R}(\F(h))$. Similarly, $\F(\Tr_{L}(h))=\Tr_{L}(\F(h))$.  From this it is immediate that $\mt$ defines an ambidextrous trace.
\end{proof}

\begin{remark}\label{R:gradedambis}  Fix a nonnegative integer $n$ and fix $a,b \in \Z_{\geq 0}$ such that $a+b = n$.  Let $s_{n} \in FP_{n,n}$ be as in Equation \ref{E:sndef}.  Then $M_{a,b}:=([a,b], s_{n})$ is an object of $\gruRep(S_{t})_{q}$ and $\F(M_{a,b})=M_{n}$, the object of $\uRep(S_{t})$ defined in Section~\ref{SS:Mndef} and shown to admit an ambidextrous trace in Section~\ref{SS:Mnambi}.  By the previous theorem, the object $M_{a,b}$ in  $\gruRep(S_{t})_{q}$ admits an ambidextrous trace.  In particular, say we fix $a,b$ so that $a-b \neq 0$ and fix $q \in F$ not a root of unity\footnote{Otherwise we obtain the writhe modulo the order of $q$.}.  Then  $M_{a,b}$ is homogeneous of degree $a-b$ and it is not difficult to see that if $K$ is an oriented framed knot, then the invariant obtained by labeling $K$ by $M_{a,b}$ is the function 
\[
K \mapsto q^{(a-b)^{2}\omega},
\] where $\omega \in \Z$ is the \emph{writhe} of $K$.

\end{remark}

\renewcommand{\bibname}{\textsc{references}} 
\bibliographystyle{amsalpha}	
	\addcontentsline{toc}{chapter}{\bibname}
	\bibliography{references}		

\vfill\vfill\vfill

\end{document}